\theoremstyle{thmstyleone}%
\newtheorem{theorem}{Theorem}[section]
\newtheorem{corollary}{Corollary}
\theoremstyle{thmstyletwo}%
\newtheorem{remark}{Remark}%
\theoremstyle{thmstylethree}%
\newtheorem{definition}{Definition}%
\newcommand{\nat}{\ensuremath{\mathbb N }}
\newcommand{\zah}{\ensuremath{\mathbb Z }} 
\newcommand{\co}{\ensuremath{\mathbb C }} 
\newcommand{\F}{\ensuremath{\mathcal F }} 
\newcommand{\con}{\ensuremath{\mathbb{C}^{n}}}
\newcommand{\hol}{\ensuremath{\mathcal{H} }}
\newcommand{\re}{\ensuremath{\mathbb R }}
\newcommand{\cp}{\ensuremath{\mathbb{CP}}}
\newcommand{\sph}{\ensuremath{\mathbb{S}}}
\newcommand{\hk}{\ensuremath{\mathbf{H}}}
\newcommand{\hkap}{\ensuremath{\mathbf{H}_{\kappa}}}
\newcommand{\hd}{\ensuremath{\mathbf{H}_{d}}}
\begin{document}
\title[Errett Bishop theorems on Complex Analytic Sets: Chow's Theorem Revisited, and Foliations of Compact Leaves on Kähler Manifolds]{Errett Bishop theorems on Complex Analytic Sets: Chow's Theorem Revisited and Foliations of Compact Leaves on Kähler Manifolds}


\author*[1,2]{\fnm{Verjovsky} \sur{Alberto}}\email{alberto@matcuer.unam.mx}

\author*[2,3]{\fnm{Martínez} \sur{Carlos Eduardo}} \email{cmartineza@ciencias.unam.mx}

\affil*[1]{\orgdiv{IMATE Cuernavaca}, \orgname{UNAM}, \orgaddress{\street{Av Universidad}, \city{Cuernavaca}, \postcode{62210}, \state{Morelos}, \country{Mexico}}}

\affil[2]{\orgdiv{Unidad de Posgrado}, \orgname{UNAM}, \orgaddress{\street{Cto. de los Posgrados, C.U.}, \city{Mexico City}, \postcode{04510}, \state{CDMX}, \country{Mexico}}}


\abstract{
  In this paper, we present a series of seemingly unrelated results of Complex Analysis which are, in fact, connected via a different approach to their proofs using the results of Errett Bishop of volumes, extensions, and limits of analytic varieties.
  We start with a brief introduction to the tools developed by Bishop and show their usefulness by proving Chow's theorem via a technique suggested a long time ago in a beautiful book by Gabriel Stolzenberg, then we show some of the relationships between the theory of analytic subsets and classical results of complex-analytic functions. 
  We finish with the original contributions of this paper which consists of applications of these tools to the theory of holomorphic foliations with alternative and, we believe, simpler proofs to Edwards, Millet, and Sullivan's impactful result for foliations with compact leaves in the case of complex foliations in Kähler manifolds and J. V. Pereira's global stability result for holomorphic foliations on compact Kähler manifolds.
}
\pacs[MSC Classification]{32C25, 14-0X, 14-02}

\maketitle
\tableofcontents
\section{Introduction}
\noindent The purpose of this paper is first to survey  Errett Bishop's results on 
the limit and the continuation of analytic sets with bounded volume.
We list these statements here as theorems \ref{bishop mapping}, \ref{bishop sequence}, and \ref{bishop}. These theorems were originally published in 1964 in an article titled \textit{Conditions for the Analyticity of certain sets} (see \cite{Bishop}).
The presentation of the results we show here are expository in nature and are inspired by the 1966 Gabriel Stolzenberg's book \textit{Volumes, Limits and Extensions of Analytic Varieties} (see \cite{Stolzenberg}) which also was the catalyst for the exploration of Chow's theorem and stability of holomorphic foliations on Kähler manifolds via Bishop's results.

In 1949, Wei-Liang Chow published the paper \textit{``On Compact Complex Analytic Varieties''}, which contains the proof of a long-standing conjectured result that gives a deep link between analytic geometry and projective algebraic geometry.
The result, now known as \emph{Chow's theorem} states that every closed analytic subvariety of the complex projective space $\cp^n$ is algebraic. The history regarding this result is related to the development of important results of analytic sets of the 20th century. The original proof of Chow was done via analytic simplexes (an analytic simplex is a topological simplex in $\cp^n$ that is biholomorphic with the standard simplex in some complex affine space).
Later in 1953, Reinhold Remmert and Karl Stein proved that the closure of a purely $k$-dimensional is also an analytic set under conditions which, in particular, imply that the complex cone of a projective subvariety is also an analytic set.
Jean-Pierre Serre in his famous 1956 paper \textit{Géometrie Algébrique et Géométrie Analytique} colloquially known as GAGA, proves the same statement with a totally different set of tools.

The proof of Chow's Theorem we present here does not reference directly the algebraic properties of the ring of germs of analytic functions, nor do we use properties of quasi-coherent sheaves; instead, we are concerned with local properties of varieties and the Hausdorff measure of said variety, respectively.
We think this approach is very attractive and easier to understand for people who are not very familiar with the modern categorical language of algebraic geometry. Furthermore, the theory presented here is linked to areas of mathematics that are not usually associated with Chow's result.
In addition, Bishop's results imply both Chow's and Remmert-Stein's theorems directly, meaning that this view is just as efficient and profound as Remmert-Stein's proof.

We present more results that link the theory of complex algebraic sets and some well-known theorems in complex analysis, see \textbf{Table} \ref{table_complex_analysis}.
Furthermore, a careful study of Bishop's results pointed us to apply them to foliations with compact leaves on Kähler manifolds in a manner similar to Edwards, Millet, and Sullivan in \cite{EMS} and J. V. Pereira's result in \cite{Pereira}.

\section{Preliminaries}
 We begin with some basic definitions and terminology which can be found, for instance,  in \cite{Chirka}.

\begin{definition} Let $M$ be a complex manifold. A subset $A \subset{M}$ is called a \textit{complex analytic subset} of $M$, if for each  $p\in{M}$ there exists an open neighborhood $U\subset{M}$ of $p$ and finitely many holomorphic functions $f_1,\ldots, f_k :U\to\co$ such that
\[
  A\cap U=\left \{z\in U \,|\, f_1(z)= \cdots =f_k(z) =0\right\}.
\]
\end{definition}

\begin{definition}
A subset $A \subset{M}$ is called a \textit{complex analytic set}, if for each $p \in A$ there exists an open neighborhood $U$ of $p$ and finitely many holomorphic functions
$f_1,\ldots,f_k : U\to\co$ such that $A\cap U=\left \{z\in U\, :\, f_1(z)=\cdots=f_k(z) =0\right\}$
\end{definition}

\begin{remark}The subtle difference in the definitions ($p\in{M}$ {\it vs} $p\in{A}$) is that a complex analytic subset of a complex manifold is a closed subset of the manifold.
A complex analytic set is locally closed but not necessarily closed. For example, a non-empty open subset of $\co^n$ is an analytic manifold, but it is not closed in $\co^n$.
\end{remark}

We denote the sheaf of holomorphic functions on $\con$ by $\hol_{n}$ and for $U\subset\con$ an open subset, the ring of holomorphic functions on $U$ will be denoted by $\hol_{n}(U)$ or simply $\hol(U)$, thus $\hol_n:=\hol(\con)$.
Given an open subset $U$ of $\con$ and a finite subset of functions $\lbrace f_1,\dots,f_k\rbrace\subset\hol(U)$ we will denote the \textbf{\emph{vanishing locus}} of $f_i$, i.e. the set of points where all of these functions vanish, by
\[\label{vanishing-locus}
  V_U(f_1,\dots,f_k):=\big\lbrace z\in U\,\big|\,f_1(z)= \cdots = f_{k}(z)=0 \big\rbrace
\]

In the case of $\con$, an \textit{\textbf{analytic set}} of $\con$ is a locally ringed subset
$X\subset\con$, meaning that for every $x\in X$ there is a neighborhood $U$ of
$x$ and a finite subset $\lbrace f_1,\dots,f_k\rbrace\subset\hol(U)$ with
\hbox{$X\cap U=V_U(f_1,\dots,f_k)$}, together with the local ring
\[
  \hol_X:= \hol_n/\mathcal{I}(X)\hspace*{0.1cm},\text{where}\hspace*{0.2cm}
  \mathcal{I}(X):=\big\lbrace f\in\hol_n\,|\,f|_{U\cap X}=0 \big\rbrace.
\]

\begin{definition} An \textit{\textbf{analytic space}} $(X,\hol_{X})$ is a
        topological Hausdorff space $X$ together with a local ring structure $\hol_X$ that is locally isomorphic to an analytic set of $\con$. Meaning that, there exists a local homeomorphism $\varphi$ such that its pullback $\varphi_{*}[f]=f\circ\varphi$ is a ring isomorphism between the aforementioned local rings.
        We call a neighborhood with its local isomorphism a \textit{chart}.
\end{definition}

An \textbf{\textit{analytic subspace}} of an analytic space
$(X,\hol_{X})$ consists of a subset $Y\subset X$ such that for every $y\in Y$
there is a chart $(U,\varphi)$ around $y$ with
\[
  \varphi(Y\cap U)=V_{\varphi(U)}(\eta_1,\dots,\eta_k),
\]

\noindent and $\lbrace\eta_1,\dots,\eta_k\rbrace\subset\hol_{n}$, naturally we have a local ring structure $(Y,\hol_{Y})$ given by
\hbox{$\hol_Y:=\hol_X/\mathcal{I}(Y)$}, as before.

\begin{definition} A subset $N\subset{M}$ of the complex $n$-dimensional manifold 
$M$ is called a complex submanifold of  $M$ of dimension $d$ ($0\leq{d}\leq{n}$), if it is closed  and for each
$p\in{N}$ there is an open neighborhood $U\subset{M}$ of $p$ and a biholomorphic map $\varphi:U\to{V}\subset\co^n$, where $V$ is an open
subset (so that $\varphi$ is a holomorphic chart), 
so that.  $\varphi(N\cap{U})=\varphi(U)\cap\co^d$.
\end{definition}
 \medskip
\begin{remark} It follows from the definition that a complex submanifold of a complex manifold \(M\) is an analytic subset of \(M\).
\end{remark}

\begin{remark} Given $M$ a complex manifold. An {\bf immersed complex submanifold} of $M$ is a subset $N$ endowed with a topology (not necessarily the subspace topology) with respect to which it is a topological manifold, and a complex structure with respect to which the inclusion map $N\hookrightarrow{M}$ is a holomorphic immersion.
In other words, $N$ is the image of a holomorphic injective immersion of a complex manifold. Since embeddings with closed image are the same thing as proper injective immersions, the image of a complex manifold under a proper injective immersion is a complex submanifold.
\end{remark}

We denote \textit{complex projective space of dimension $n$} by $\cp^n$; this is a complex manifold and therefore an analytic space with its usual affine charts.
Algebraic subsets on $\mathbb{C}^{n+1}$ consist of the zeroes of a
finite number of polynomials in $\co[z_0,\dots ,z_n]$.
Note that using projective coordinates, the set of vanishing points of homogeneous polynomials is well defined on $\cp^{n}$ and therefore algebraic subsets of $\cp^{n}$ are well defined.
Analytic and algebraic subsets of $\cp^n$ are going to be called \textit{\textbf{projective} analytic subsets} and \textit{\textbf{projective} algebraic subsets} respectively.
The following theorem states the equality of the notions of projective analytic subsets and projective algebraic subsets:
\medskip

\begin{theorem}[Chow]
  Every closed projective analytic subset is a projective algebraic subset.
\end{theorem}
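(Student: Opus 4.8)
The plan is to pass to the affine cone over $X$ and use Bishop's analyticity theorem to extend that cone across the origin, after which an elementary homogeneity argument makes it algebraic. Let $X\subset\cp^n$ be a closed analytic subset and let $\pi\colon\mathbb{C}^{n+1}\setminus\{0\}\to\cp^n$ be the tautological projection. First I would observe that $\widehat X:=\pi^{-1}(X)$ is a closed analytic subset of $\mathbb{C}^{n+1}\setminus\{0\}$, since the preimage of an analytic set under the holomorphic submersion $\pi$ is analytic, and that $\widehat X$ is a cone: $z\in\widehat X$ if and only if $\lambda z\in\widehat X$ for every $\lambda\in\co^{*}$. As $X$ is closed in the compact space $\cp^n$ it is compact, so $\widehat X\cap\{\tfrac12\le\lvert z\rvert\le1\}$ is compact and therefore has finite $2k$-dimensional volume, where $2k$ is twice the (locally constant) complex dimension of $X$; since the scaling $z\mapsto\lambda z$ multiplies $2k$-volume by $\lvert\lambda\rvert^{2k}$, the volume of $\widehat X$ inside the unit ball is finite, being bounded by $\bigl(\sum_{j\ge0}2^{-2jk}\bigr)$ times the volume of that annular piece.

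Having the locally finite volume near $0$, I would invoke Bishop's theorem (Theorem~\ref{bishop}): a purely $k$-dimensional analytic subset of a punctured neighbourhood of a point, with locally finite $2k$-volume there, extends to an analytic subset of the full neighbourhood, i.e. its closure is analytic. Hence $\overline{\widehat X}=\widehat X\cup\{0\}$ is an analytic subset of $\mathbb{C}^{n+1}$. Pick a ball $B$ centred at the origin and functions $g_1,\dots,g_r\in\hol(B)$ with $\overline{\widehat X}\cap B=V_B(g_1,\dots,g_r)$. Expanding each $g_j=\sum_{d\ge0}g_{j,d}$ into its homogeneous components and using that $\overline{\widehat X}\cap B$ is stable under $z\mapsto\lambda z$ for $\lvert\lambda\rvert\le1$, I would show that $\overline{\widehat X}\cap B$ is precisely the common zero locus inside $B$ of all the homogeneous polynomials $g_{j,d}$: if $z\in\overline{\widehat X}\cap B$ then $g_j(\lambda z)=\sum_d\lambda^d g_{j,d}(z)=0$ for all $\lvert\lambda\rvert\le1$, so the holomorphic function $\lambda\mapsto g_j(\lambda z)$ vanishes identically near $0$, forcing each $g_{j,d}(z)=0$; the reverse inclusion is immediate.

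The ideal generated in $\co[z_0,\dots,z_n]$ by the $g_{j,d}$ is homogeneous, so by the Hilbert basis theorem it admits finitely many homogeneous generators $f_1,\dots,f_m$; put $C=\{\,w\in\mathbb{C}^{n+1}\mid f_1(w)=\dots=f_m(w)=0\,\}$, an algebraic cone with $C\cap B=\overline{\widehat X}\cap B$ (the $f_i$ and the $g_{j,d}$ generate the same ideal, hence have the same zero locus). To conclude I would argue that a cone is determined by its germ at the origin: for a complex line $\ell$ through $0$ one has $\ell\subset C$ exactly when some nonzero point of $\ell$ lies in $C$ (the $f_i$ being homogeneous), while $\ell\cap\widehat X$ is a scale-invariant analytic subset of $\ell\setminus\{0\}\cong\co^{*}$ and hence is empty or all of $\ell\setminus\{0\}$; since $\ell\cap B\neq\emptyset$, whether $\ell$ lies in $C$ and whether it lies in $\overline{\widehat X}$ are both decided inside $B$, where the two cones coincide. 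Therefore $C=\overline{\widehat X}$, and applying $\pi$ gives $X=\pi(C\setminus\{0\})$, which is exactly the projective algebraic subset cut out by the homogeneous polynomials $f_1,\dots,f_m$.

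The crux is the extension across the origin: verifying Bishop's finite-volume hypothesis and applying his analyticity criterion. This is the step that in the usual textbook treatment is supplied by the Remmert--Stein theorem, and the whole point of the approach is that it can instead be obtained from the volume-theoretic results of Bishop. One technical point to treat with care is the reduction to the purely $k$-dimensional situation: one should first decompose $X$ into its irreducible components (or into the strata where the local dimension is constant), apply the preceding analysis to the cone over each piece so that Bishop's theorem genuinely applies, and then take the union of the resulting projective algebraic sets.
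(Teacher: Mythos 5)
Your argument is essentially correct, but from the homogeneity step onward it is a genuinely different proof from the one in the paper --- in fact it is the classical proof (as in Chirka) that the paper explicitly sets out to avoid. Both proofs begin the same way: form the cone over $X$ and extend it analytically across the origin. (Note, though, that the result you invoke for this step --- finite local volume in a punctured neighbourhood implies the closure is analytic --- is not what is labelled Theorem~\ref{bishop} here; in this paper that label is the Liouville-type statement that polynomial volume growth implies algebraicity, and the extension across $0$ is supplied instead by Theorem~\ref{Rem-Stein}, which applies directly since $\{0\}$ has dimension $0\le k$ and needs no volume estimate at all.) After that the routes diverge. You take local defining functions at the origin, expand into homogeneous Taylor components, use homothety invariance to replace the $g_j$ by the family $\{g_{j,d}\}$ of homogeneous polynomials, and then appeal to the Hilbert basis theorem to extract finitely many generators; this reintroduces exactly the Noetherianity argument the authors wish to excise. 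The paper instead computes, via the Hopf fibration, Fubini and polar coordinates, that homothety invariance forces $Vol_{2k+2}(Cone(X)\cap B(R,0))=\frac{A}{2k+1}R^{2k+2}$, and then applies Theorem~\ref{bishop}, whose proof manufactures the defining polynomials analytically (canonical defining functions of a $\sigma$-sheeted branched cover plus Cauchy estimates), so no commutative algebra is needed. Your version is shorter on the analysis; the paper's buys a purely geometric--analytic proof, which is its stated point. Two small slips to fix if you keep your route: the cone over a $k$-dimensional $X$ is purely $(k+1)$-dimensional, so the relevant volume in your annulus estimate is $Vol_{2k+2}$, not $Vol_{2k}$ (the dyadic series then converges just as well); and the finiteness of the volume of a compact piece of an analytic set is Lelong's local finiteness of Hausdorff measure, not a consequence of compactness alone.
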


In order to prove this statement, we proceed with some properties of volumes of
purely $k$-dimensional analytic subsets and their Hausdorff measures, then
we proceed to enunciate one of the key theorems for this proof, a result that
we call \textit{\textbf{Bishop's proper mapping theorem}}.

\section{Volumes of analytic sets and Wirtinger's Inequality}

Volumes and metrics have a natural relationship that we are going to exploit throughout this article.
In the context of complex analytic spaces and manifolds, Kähler geometry is special for its interplay
between the metric and the associated volumes, both for the whole manifold and
for its submanifolds. Related to this is the result due to Wirtinger that, among
other things, implies that Kähler submanifolds of a Kähler manifold minimize the
volumes of their respective homological class. This will be of crucial
importance when dealing with the volume function of the leaves on a compact
Kähler manifold in theorem \ref{kahlerEMS}.
\begin{definition}
        Let $M$ be a complex manifold with an integrable almost-complex structure 
        $J:TM\rightarrow TM$. A \textit{Hermitian metric} on $M$ is a smooth family
        real bilinear forms $\{h_{p}\,:\,p\in M\}$ where
        $h_{p}:T_{p}M\times T_{p}M\rightarrow\co$ have the following properties
\begin{itemize}
        \item $h_p(JX,JY)=h_p(X,Y)\hspace{0.3cm}\forall\lbrace X,Y\rbrace\subset T_{p}M\hspace{0.3cm}\forall p\in M$.
        \item $h_p(X,JX)>0\hspace{0.3cm}\forall X\in T_p M\setminus\lbrace 0\rbrace\hspace{0.3cm}\forall p\in M$.
        \item $h_p(X,Y)=\overline{h_p(Y,X)}\hspace{0.3cm}\forall\lbrace X,Y\rbrace\subset T_{p}M\hspace{0.3cm}\forall p\in M$.
\end{itemize}
We call a manifold with a Hermitian metric a \textbf{\textit{Hermitian manifold}}.
\end{definition}
 We often use the standard Riemannian notation $\langle\cdot,\cdot\rangle$ for $h(\cdot,\cdot)$. We note here that a Hermitian metric $h$ has an associated Riemannian metric and an associated 2-form given by the real and imaginary part of $h$ and vice versa. A Riemannian manifold of even dimension $(M,g)$ with a complex structure $J$ has a natural Hermitian metric given by
\[
  h(X,Y):=g(X,Y)-i\omega(X,Y).
\]
Where $\omega$ is its associated 2-form given by $\omega(X,Y)=g(JX,Y)$, which is clearly antisymmetric since $J^2=-Id_M$.
\begin{definition}
        Let $(M,h)$ be a Hermitian manifold with Hermitian metric
        $h$ and associated 2-form $\omega$, $h$ is a Kähler metric if
        $d\,\omega=0$. We call a manifold with a Kähler metric a \textit{Kähler manifold}.
        The associated 2-form will be called a \textit{Kähler form} or a
        \textit{Kähler symplectic form}.
\end{definition}
Being a Kähler manifold has some strong topological restrictions. For example, powers of the Kähler
form $\omega^k$ are nontrivial representatives of cohomology classes in
$H^{2k}(M ;\re)$, meaning that these groups are never trivial. More important
for our purposes is the fact that, by our definition of submanifold, it follows that every complex submanifold
(or immersed complex submanifold) of a Kähler manifold is also Kähler. This follows from the fact that the Kähler form $\omega$ is closed and the commutativity of the exterior derivative with the pullback $f^*$, where $f$ is either an immersion or a parametrization. Other consequences for Kähler manifolds are the following:

 \medskip

\begin{theorem}[Wirtinger's inequality]Let $V$  be a real vector space  of even dimension $2k$ endowed with a positive-definite inner product $g$, symplectic form $\omega$, and  an almost-complex structure $J$  linked, as before by $\omega(u, v) = g(J(u), v)\, ,\forall\,u,v\in{V}$. Then for any orthonormal vectors $u_1, \dots, u_{2k}$ the following inequality holds:
 \[ (\underbrace {\omega \wedge \cdots \wedge \omega } _{k{\text{ times}}})(u_{1},\ldots ,u_{2k})\leq k!.
 \]
There is equality if and only if the span of $u_1,\dots, u_{2k}$ is closed under the action of $J$.
\end{theorem}
As a consequence of the volume forms for submanifolds of Kähler manifolds we have the following
\begin{corollary}[Complex submanifolds of Kähler manifolds minimize volume]
Let $M$ be a Kähler manifold with Kähler form $\omega_M$ and let $f:N\rightarrow M$ be a closed and oriented immersion
of an oriented real manifold of real dimension $2k$. Let $\omega=f^*\,\omega_m$, then
\begin{equation}
            \int_{N} \frac{\omega^k}{k!}\leq \int_N d\textrm{Vol}_N\hspace{0.3cm}\text{where }d\textrm{Vol}_N\text{ is the volume form of N,}
\end{equation}
and the equality holds if and only if $N$ is a complex submanifold of $M$.
\end{corollary}
We can state the natural conclusions as the following theorem and its corollary
\begin{theorem}
        Any complex submanifold of a Kähler manifold is a minimal submanifold.
\end{theorem}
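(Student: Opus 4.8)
The plan is to realise the Kähler form as a \emph{calibration} and then invoke the two standard consequences: calibrated submanifolds minimise volume, and volume–minimising submanifolds are minimal. Write $\omega$ for the Kähler form of the ambient $M$ and set $\Omega_k:=\omega^k/k!$. Since $d\omega=0$, each $\Omega_k$ is closed. The analytic input is Wirtinger's inequality: for every oriented real $2k$–dimensional subspace $\xi\subset T_pM$ with unit simple $2k$–vector $\zeta$ one has $\Omega_k(\zeta)\le 1$, with equality exactly when $\xi$ is $J$–invariant. Hence $\Omega_k$ is a calibration, and if $N\subset M$ is a complex $k$–dimensional submanifold then at each point its (canonically oriented) tangent plane is complex, so $\Omega_k$ restricts to $N$ as the Riemannian volume form; that is, $N$ is calibrated by $\Omega_k$. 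Here it is the \emph{isometric} immersion in the paper's definition of submanifold that guarantees the induced metric and volume on $N$ are the ones being compared.

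Next I would run the usual Stokes argument locally. Fix $p\in N$ and a small geodesic ball, and let $N'$ be the part of $N$ inside it; for any competitor $N''$ with $\partial N''=\partial N'$ and $N'-N''=\partial S$ for a chain $S$ in the ball, closedness of $\Omega_k$ gives $\mathrm{Vol}(N')=\int_{N'}\Omega_k=\int_{N''}\Omega_k+\int_S d\Omega_k=\int_{N''}\Omega_k\le\mathrm{Vol}(N'')$, the last step by Wirtinger applied pointwise on $N''$. Thus $N$ is locally volume minimising (the global homological statement the excerpt alludes to is the same computation when $N$ is compact). Finally, for any normal variation $N_t$ of $N'$ fixing $\partial N'$ with variation field $V$, local minimality at $t=0$ forces $\frac{d}{dt}\big|_{t=0}\mathrm{Vol}(N_t)=0$; by the first variation formula this equals $-\int_{N'}\langle H,V\rangle\,dA$ for the mean curvature vector $H$, and since $V$ is arbitrary we conclude $H\equiv 0$, i.e.\ $N$ is minimal.

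The step that really carries the theorem is Wirtinger's inequality — the linear–algebra fact that $\Omega_k$ on a unit $2k$–vector is $\le 1$ with equality iff the plane is complex; this is precisely the tool the paper flags as crucial, and everything else (closedness, Stokes, the first variation formula) is formal. I would also watch two small points: that the orientations match so that $\Omega_k$ genuinely equals $+\mathrm{vol}_N$ on $N$, and that localising to a ball is legitimate because we only need a stationary, not a globally minimising, conclusion.

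If one prefers to avoid calibrations, an alternative route is a direct computation with the second fundamental form $\mathrm{II}$ of $N$ in $M$. Using that a Kähler metric is characterised by $\nabla J=0$ one shows $\mathrm{II}(JX,Y)=J\,\mathrm{II}(X,Y)=\mathrm{II}(X,JY)$; symmetry of $\mathrm{II}$ then gives $\mathrm{II}(JX,JY)=-\mathrm{II}(X,Y)$. Choosing a local orthonormal frame of the form $e_1,Je_1,\dots,e_k,Je_k$, the mean curvature is $H=\sum_i\big(\mathrm{II}(e_i,e_i)+\mathrm{II}(Je_i,Je_i)\big)=0$. Here the only non–formal ingredient is, once more, where the hypothesis enters: the identity $\nabla J=0$ equivalent to $d\omega=0$.
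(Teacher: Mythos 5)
Your proposal is correct, but note that the paper offers no proof of this statement at all: it is listed, together with Wirtinger's inequality, under ``Other consequences'' of the K\"ahler condition and is simply cited as a known fact. So there is no argument in the paper to compare against; what can be said is that your first route (realising $\omega^k/k!$ as a calibration, using Wirtinger pointwise plus Stokes to get local volume minimisation, then the first variation formula to conclude $H\equiv 0$) is exactly the chain of ideas the paper is implicitly relying on, since it states Wirtinger's inequality immediately afterwards and extracts the homological volume-minimisation statement as Corollary~\ref{wirtinger}. Your second route, via $\nabla J=0$ and the identity $\mathrm{II}(JX,JY)=-\mathrm{II}(X,Y)$ traced over an adapted frame $e_1,Je_1,\dots,e_k,Je_k$, is the more elementary and self-contained proof: it needs no integration, no Stokes, and no choice of competitor, only the Gauss formula and the fact that $J$ preserves both $TN$ and its normal bundle. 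The one imprecision worth flagging is your aside that $\nabla J=0$ is ``equivalent to $d\omega=0$'': for a general almost Hermitian structure this is false (one also needs integrability of $J$); it is true here only because the paper works with genuine complex manifolds, so you should say that the equivalence uses the vanishing of the Nijenhuis tensor. Neither route has a gap.
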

 \medskip
\begin{corollary}\label{wirtinger}
        Let $N$ be a complex compact submanifold of a Kähler manifold $M$, then $N$ is a volume-minimizing submanifold in its homology class $H_{2k}(M,\partial N,\zah)$, meaning that for any real submanifold $X$ of real dimension $2k$ that is homologous to $N$, has a greater volume than $N$, i.e.
        \[
          \textrm{Vol}_{2k}(N)\leq \textrm{Vol}_{2k}(X).
        \]
\end{corollary}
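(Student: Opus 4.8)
The plan is to run the standard \emph{calibration} argument, with Wirtinger's inequality playing the role of the calibration. Put $\phi := \tfrac{1}{k!}\,\omega_M^{k}$, a $2k$-form on $M$. Two properties of $\phi$ carry the whole proof: it is closed, and by Wirtinger's inequality it restricts to the Riemannian volume form on any complex $2k$-dimensional submanifold while being dominated by the volume density on every other tangent $2k$-plane. First I would record closedness: since $M$ is K\"ahler, $d\omega_M=0$, hence $d\phi=\tfrac{1}{(k-1)!}\,\omega_M^{\,k-1}\wedge d\omega_M=0$. Then, orienting $N$ by its complex structure, the equality case of Wirtinger's inequality applied to the inclusion $N\hookrightarrow M$ gives $\phi|_N=dVol_N$, so that
\[
Vol_{2k}(N)\;=\;\int_N dVol_N\;=\;\int_N \phi .
\]

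The second step is to transfer this period from $N$ to $X$ via the homological hypothesis. Regarding $N$ and $X$ as compact $2k$-chains with $\partial N=\partial X$, the difference $N-X$ is a $2k$-cycle, and since $[X]=[N]$ in $H_{2k}(M,\partial N,\dbz)$ this cycle bounds: there is a compact $(2k+1)$-chain $W$ in $M$ with $\partial W=N-X$. Stokes' theorem together with $d\phi=0$ then yields
\[
\int_N\phi-\int_X\phi\;=\;\int_{N-X}\phi\;=\;\int_{\partial W}\phi\;=\;\int_W d\phi\;=\;0 ,
\]
so $\int_X\phi=\int_N\phi=Vol_{2k}(N)$.

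For the last step I would invoke the inequality part of Wirtinger's theorem for $X$ with any orientation: pointwise the restriction of $\phi$ to the tangent space of $X$ has absolute value at most the Riemannian volume density, so $\int_X\phi\le\int_X dVol_X=Vol_{2k}(X)$. Combining with the previous display gives $Vol_{2k}(N)=\int_X\phi\le Vol_{2k}(X)$, which is exactly the assertion of Corollary~\ref{wirtinger}.

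The genuinely routine ingredients are the closedness of $\phi$ and the comass bound, both immediate from the K\"ahler hypothesis and from Wirtinger's inequality, which is already available. The step that needs care is the middle one: one must interpret ``$X$ homologous to $N$ with boundary $\partial N$'' so that $N-X$ is truly the boundary of a \emph{compact} singular $(2k+1)$-chain (that is, so that a relative cycle representing $0$ in $H_{2k}(M,\partial N,\dbz)$ is killed by a compact chain), because it is exactly Stokes' theorem for the closed form $\phi$ that converts ``homologous'' into ``equal $\phi$-period''. Once that identification is in place the argument closes at once; moreover the same proof works verbatim, with mass in place of volume and the boundary operator on currents in place of Stokes' theorem, if one wishes to compare $N$ against arbitrary integral currents rather than smooth submanifolds.
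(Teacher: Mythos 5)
Your calibration argument is correct and is precisely the derivation the paper intends: the corollary is stated without proof as an immediate consequence of Wirtinger's inequality, and your three steps (closedness of $\omega_M^k/k!$, equality on the complex submanifold $N$, the comass bound on $X$, linked by Stokes' theorem) are the standard way to extract it. The one subtlety you flag --- that in $H_{2k}(M,\partial N,\dbz)$ the difference $N-X$ is $\partial W$ only up to a chain supported in $\partial N$ --- is harmless, since a $2k$-form integrates to zero over chains carried by the $(2k-1)$-dimensional set $\partial N$.
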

Given $(M,g)$ an oriented Riemannian manifold of dimension $n$, then $M$ has a natural volume form $\textrm{Vol}_{n}$. In local coordinates, it is expressed as
\[
 \textrm{Vol}_{n}=\sqrt{|g|}dx_{1}\wedge\dots\wedge dx_{n}.
\]
Let $\omega=-Im\langle\cdot,\cdot\rangle$ be the standard 2-form of the standard Euclidean Kähler metric in $\con$ and let $M$ be a complex submanifold of $\con$ by Wirtinger's inequality, if $\textrm{Vol}_{2k}(M)$ is the volume of $M$ given by the Riemannian structure as before, $g=Re\langle\cdot,\cdot\rangle|_{M}$, then
\begin{equation}
        \textrm{Vol}_{2k}(M)= \frac{1}{k!}\int_M \omega^k.
\end{equation}
If $X$ is a purely $k$-dimensional analytic subset of $\con$, the singular locus of $X$, $\Sigma(X)$ is the set of points where the defining functions of $X$, say $X\cap U=V(f_{1},\dots,f_{n-k})$ have derivatives that span a space of dimension \emph{less than $k$}, meaning that the matrix defined by the partial derivatives $A=(\partial_{z_{i}}f_{j})_{ij}$ has a rank \emph{less than $k$}. Therefore $\Sigma(X)$ is an analytic subset of $X$ of lesser dimension defined as the \textbf{\emph{locus}} of the minors of dimension \emph{greater than} $n-k$ of $A$ and so $M=X\setminus\Sigma(X)$ is a complex manifold with the same volume as $X$, meaning
\[
    \textrm{Vol}_{2k}(X)= \frac{1}{k!}\int_{X} \omega^k= \frac{1}{k!}\int_{X\setminus\Sigma(X)} \omega^k=
    \textrm{Vol}_{2k}(X\setminus\Sigma(X)).
\]
\section{Calibrated manifolds}
 Now we would like to address \emph{calibrations and calibrated manifolds}, a \textbf{\emph{calibrated geometry}} is a way to describe a geometry via a distinguished family of submanifolds, more precisely, a volume minimizing family of subsets. This approach can be better suited for understanding \textbf{\emph{foliated geometries}}, as we will show that Bishop's theorems will help us with such an endeavor. In order to distinguish said family of subsets, a \textbf{\emph{calibration}} is needed; a calibration can be thought of as a way to find volume-minimizing manifolds of certain dimensions (see \cite{Harvey}). Kähler manifolds and their submanifolds have a natural calibration by Wirtinger's inequality. The relationship between the symplectic and volume forms of a hermitian inner product and the Riemannian volume implies that the normalized exterior powers of the Kähler form of a Kähler manifold are calibrations.
\newpage
\begin{definition}[Calibrated manifolds] A \textbf{calibrated $n$-manifold} is a Riemannian manifold $(M, g$) of dimension $n$ equipped with a differential $p$-form $\varphi\,$  ($0 \leq {p} \leq{n}$) with the following properties
\begin{enumerate}
 \item $\varphi$ is closed, meaning $d\varphi = 0$, where $d$ is the exterior derivative.
 \item For any $x \in M$ and any oriented $p$-dimensional subspace $\xi$ of $T_{x}M$, $\varphi|_{\xi} = \lambda \textrm{Vol}_{\xi}$ with $\lambda\leq 1$.
\end{enumerate}
When referring to Riemannian manifolds, $\textrm{Vol}_{\xi}$ denotes the volume form of  $\xi$ with respect to the Riemannian metric $g$. The $p$-form its called a $p$-\textbf{calibration}.
\end{definition}
For $x\in M$, set $G_{x}(\varphi) = \{ \xi \subset T_{x}M\,|\,  \varphi|_{\xi} = \textrm{Vol}_{\xi} \}$. In order for the theory to be nontrivial, we need $G_x(\varphi)$ to be nonempty. Let $G(\varphi)$ denote the union of all $G_x(\varphi)$ with $x\in M$.
\begin{definition}[Calibrated submanifold]
A $p$-dimensional calibrated submanifold of a manifold $M$ with calibration $\varphi$ is an oriented submanifold $\Sigma$ such that the calibration restricted to the tangent bundle of $\Sigma$, equals the induced volume form of the submanifold $\varphi|_{T\Sigma}=\textrm{Vol}_{\Sigma}$. Equivalently $T\Sigma\subset G(\varphi)$
\end{definition}
For calibrated manifolds and submanifolds the volume-minimizing property of submanifolds in the same homological class is proven by the following one line argument
\begin{equation}\label{oneline}
  \int_{\Sigma}\mathrm{Vol}_{\Sigma}=\int_{\Sigma}\varphi =\int_{\Sigma'}\varphi \leq \int_{\Sigma'}\mathrm{Vol}_{\Sigma'}
\end{equation}
where the first equality holds because $\Sigma$ is calibrated, the second equality is Stokes' theorem (as $\varphi$ is closed), and the last inequality holds because $ \varphi$ is a calibration.
Here $\Sigma$ is a calibrated submanifold of $M$ and $\Sigma'$ is any submanifold in the same homology class of $\Sigma$. 

\begin{remark}
Note that a Kähler manifold $(M,\omega)$ is a calibrated manifold with its calibrations given by the Kähler form and its powers $\{\omega^{k}\,|\, 1\leq k\leq \dim(M)\}$. Meaning that $(M,\omega^{k})$ is a calibrated manifold for every $k\in\{1,\dots,\dim(M)\}$, and the previous one-line argument is a version of Wirtinger's inequality. This has the important consequence that two isotopic compact complex submanifolds of a Kähler manifold have the same volume. We will show in Theorem \ref{kahlerEMS} that the volume function
of a holomorphic foliation by compact leaves on a connected Kähler manifold is constant
in an open Zariski dense set, which is the complement of the union of the leaves with nontrivial holonomy, which must be a finite group, and,  by a theorem of H. Cartan \cite{Cartan}, this must be an analytic set.
\end{remark}
\section{Hausdorff measure and Bishop's theorems}

\noindent Besides the volumes of analytic sets, we will study the Hausdorff measure of said sets to determine their dimension. It is worth noting that for purely dimensional analytic sets, the topological and Hausdorff dimensions coincide. Moreover, an advantage of analyzing analytic sets from the point of view of Bishop's results is that, under the right assumptions, it provides guarantees about the Hausdorff dimension of limits and closures of said analytic sets, as we shall see.

We denote the $\delta$-Hausdorff measure of a subset $S\subset\con$ by $\hd(S)$, see \citep{Stolzenberg}[ch. 3]. Here is a brief introduction to it, along with some of its key properties.

Let $(X,\rho)$ be a metric space and let $S \subset X$, then let the diameter of $S$ be defined by $\textrm{diam}(S) = \sup\{\rho(x,y)\,|\, x, y\in S\}$. That is to say, the diameter of a set is the distance between the farthest two points in the set.
\begin{definition}
Let $S$ be any subset of $X$, and $\delta > 0$ a real number. We define the Hausdorff outer Measure of dimension $d\in\re^{+}$ bounded by $\delta$ (written $H^{d}_{\delta}$) by:
\begin{equation}
H^{d}_{\delta}(S)=\inf\Big\{ \sum^{\infty}_{i=1}{(\textrm{diam}(U_{i}))}^{d}\,|\,S\subset\bigcup^{\infty}_{i=0}U_{i},\quad\textrm{diam}(U_{i}) < \delta\Big\}. 
\end{equation}
Where the infimum is taken over all countable covers of $S$ by sets $U_{i} \subset X$ satisfying $\textrm{diam}(U_{i})<\delta$.
\end{definition}
If we allow $\delta$ to approach zero, the infimum is taken over a decreasing collection of sets, and therefore $H^{d}_{\delta}$ increases. We can conclude that
\[
  \hd(S):=\lim_{\delta\rightarrow 0} H^{d}_{\delta}(S)=\sup_{\delta>0}H^{d}_{\delta}(S)
\]
exists, but may be infinite. We call this limit the \emph{\textbf{Hausdorff Outer Measure} of dimension} $d$, the Hausdorff Outer Measure is a way for us to measure dimensionality is a way that is directly related to volumes of analytic sets a we will show shortly. Here we list some commonly known properties that are useful for our purposes and can be found in standard sources see \cite{Stolzenberg}.

\begin{enumerate}
        \item[1.] If $\hd(S)<\infty$, and $d<\kappa$, then $\hkap(S)=0$.

        \item[2.] If $f:X\rightarrow Y$ is a Lipschitz continuous function with
        Lipschitz constant $\lambda$, then for any $\delta\in\re^{+}$ and
        $S\subset X$, the following inequality holds
            \[
                \hd(f(S))\leq\lambda^{\delta}\hd(S).
            \]

        \item[3.]If $X=\re^{n}$ and $S=M$ is a smooth submanifold of dimension
        $k\in\zah^{+}$, then the volume of $M$ as a submanifold is related to
        its Hausdorff measure by the formula
        \[
            \textrm{Vol}_{k}(M)=\alpha_k\hk_k(S),\hspace*{0.3cm}\alpha_k= \frac{\pi^{k/2}}{\Gamma(k/2+1)},
        \]
\noindent where $\Gamma(z)$ is the Euler's gamma function. The constant $\alpha_k$ is the volume of the unit ball in $\re^k$
\end{enumerate}

Related to the Hausdorff measure is the Hausdorff metric defined for $K_1$ and $K_2$ compact subsets of a metric space $(X,d)$ as

\[ d_{h} (K_1,K_2)=\max \left\{\,\sup _{x\in K_1}d(x,K_2),\,\sup _{y\in K_2}d(K_1,y)\,\right\}.
\]

The Hausdorff metric allows us to define a convenient notion of convergence of closed subsets; let $\left\{S_n \right\}$ be any
sequence of closed subsets of $X$, then we say that $S_n$ converges to the closed subset $S$ in the Hausdorff metric, written as $S_n\overset{h}\rightarrow S$,
if for every $K\subset X$ compact we have
\[
d_h(K\cap S_n,K\cap S)\rightarrow 0.
\]

The first of Bishop's results that we present here is very useful for understanding some of the analytical properties
of limits (as defined before) of purely $k$-dimensional analytical subsets.

\begin{theorem}[Sequence theorem]\label{bishop sequence} Let $\lbrace
        V_n\rbrace$ be a sequence of purely $k$-dimensional subsets of a
        domain $\Omega\subset\con$ such that $V_n\overset{h}\rightarrow V$, with
        $V\subset\Omega$ a closed subset. If
        \[
        \textrm{Vol}_{2k}(V_n)\leq \rho\hspace*{0.2cm}\forall n\in\nat,
        \]
        \noindent for $\rho\in\re$ a positive constant, then for the Hausdorff measure we have $\hk_{2k+1}(V)=0$,
        moreover, $V$ is a purely $k$-dimensional analytic subset of $\Omega$.
\end{theorem}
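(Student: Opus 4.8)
The plan is to prove the statement in two stages: first the metric conclusion $\hk_{2k+1}(V)=0$, obtained by showing that $\hk_{2k}(V)$ is locally finite through a packing argument based on the monotonicity formula for analytic varieties; and then the analyticity of $V$, obtained by realizing $V$, near each of its points, as a branched covering of a polydisc in $\co^k$, with defining equations extracted as limits of Weierstrass pseudopolynomials attached to the $V_n$.

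For the first stage I would use the Lelong monotonicity formula --- a consequence of the minimality of analytic subvarieties, equivalently of Wirtinger's inequality --- which furnishes a dimensional constant $c_k>0$ such that $Vol_{2k}(W\cap B(y,r))\ge c_k r^{2k}$ whenever $W$ is a purely $k$-dimensional subvariety, $y\in W$, and $\overline{B(y,r)}\subset\Omega$. Fixing a compact $K\subset\Omega$ and a small $\rho>0$, I would take a maximal $\rho$-separated set $x_1,\dots,x_N\in V\cap K$, so that the balls $B(x_i,\rho)$ cover $V\cap K$ while the balls $B(x_i,\rho/2)$ are pairwise disjoint. Since $V_n\to V$, for $n$ large every $x_i$ carries a point $y_i\in V_n$ with $|y_i-x_i|<\rho/4$; the balls $B(y_i,\rho/4)$ are then pairwise disjoint, contained in $\Omega$ for $\rho$ small, and monotonicity gives $Vol_{2k}(V_n\cap B(y_i,\rho/4))\ge c_k(\rho/4)^{2k}$. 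Summing and using $Vol_{2k}(V_n)\le M$ yields $N\le 4^{2k}M/(c_k\rho^{2k})$, so the cover $\{B(x_i,\rho)\}$ shows that the $(2k)$-dimensional Hausdorff premeasure of $V\cap K$ computed with sets of diameter at most $2\rho$ is at most $N(2\rho)^{2k}\le 8^{2k}M/c_k$, a bound independent of $\rho$. Letting $\rho\to0$ gives $\hk_{2k}(V\cap K)<\infty$; as $V$ is $\sigma$-compact in $\Omega$, $\hk_{2k}(V)$ is locally finite, and property~$1$ of the Hausdorff measure (with $\delta=2k<\kappa=2k+1$) forces $\hk_{2k+1}(V)=0$.

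For the second stage, fix $p\in V$. Local finiteness of $\hk_{2k}(V)$ makes the Hausdorff dimension of $V$ near $p$ at most $2k$, so a standard integral-geometry argument (local Noether normalization) provides a complex $(n-k)$-plane through $p$ meeting $V$ only at $p$ in some neighbourhood; choosing coordinates $(z',z'')\in\co^k\times\co^{n-k}\cong\con$ accordingly and a small polydisc $\Delta'\times\Delta''$ centred at $p$, the projection $\pi(z',z'')=z'$ restricts to a proper finite map $V\cap(\Delta'\times\Delta'')\to\Delta'$; the same holds for $V_n$ once $n$ is large, since $\pi|_V$ is proper and $V_n\to V$. Hence $V_n\cap(\Delta'\times\Delta'')$ is an analytic branched cover of $\Delta'$ of some degree $d_n$, and as $\pi$ is $1$-Lipschitz the area formula gives $d_n\cdot(\text{vol}\,\Delta')\le Vol_{2k}(V_n)\le M$, so the $d_n$ are bounded; passing to a subsequence I may assume $d_n\equiv d$. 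Off the branch locus of $V_n$ --- a proper analytic, hence negligible, subset of $\Delta'$ --- the set $V_n$ is a union of $d$ holomorphic graphs with values in $\Delta''$, so the componentwise elementary symmetric functions of the sheets are bounded holomorphic functions which, by Riemann's extension theorem, extend holomorphically to $\Delta'$; these are the coefficients of a monic pseudopolynomial system $P_n$ with $V_n\cap(\Delta'\times\Delta'')=\{P_n=0\}$. By Montel's theorem a further subsequence of the coefficients converges locally uniformly to holomorphic functions, yielding a limit system $P_\infty$, and continuity of the roots of a monic polynomial under perturbation of its coefficients gives $\{P_n=0\}\to\{P_\infty=0\}$ locally in the Hausdorff sense; by uniqueness of the Hausdorff limit, $V\cap(\Delta'\times\Delta'')=\{P_\infty=0\}$, an analytic set, of pure dimension $k$ because $\pi$ exhibits it as a finite branched cover of $\Delta'$. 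Covering $V$ by such neighbourhoods finishes the proof.

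The first stage is routine once monotonicity is in hand; the work lies in the second stage. The step I expect to be the main obstacle is its very beginning: producing, from the dimension bound $\hk_{2k+1}(V)=0$ alone, coordinates in which $\pi|_V$ is a proper finite branched cover of a polydisc --- this is what makes the entire pseudopolynomial machinery applicable, and it rests on the integral-geometry estimate that a generic $(n-k)$-plane meets the finite-$(2k)$-measure set $V$ in a discrete set near $p$. After that, the remaining delicacy is mostly bookkeeping: the primitive-element relations needed when $n-k>1$, and the verification that $P_\infty$ has no spurious zeros --- that every root of $P_\infty(z_0',\cdot)$ is the $z''$-coordinate of an honest point of $V$ above $z_0'$ --- which again follows from continuity of roots together with the properness that confines the approximating roots to the compact polydisc $\overline{\Delta''}$.
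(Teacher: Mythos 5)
The paper does not actually prove this theorem: it is stated as a black box, with the argument deferred to Bishop \cite{Bishop} and Stolzenberg \cite{Stolzenberg}, so there is no in-text proof to compare yours against. What you propose is, in substance, the proof from those references, and it is sound. Stage one (Lelong monotonicity plus a packing argument giving $\hk_{2k}(V\cap K)<\infty$, hence $\hk_{2k+1}(V)=0$ by property 1 of the Hausdorff measure) is complete as written. In stage two, the step you flag as the main obstacle --- extracting from $\hk_{2k+1}(V)=0$ coordinates in which the projection is proper on $V$ near $p$ --- is exactly the paper's Theorem \ref{bishop mapping}, so inside this paper you may simply invoke it rather than rerun the integral-geometry argument. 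Two refinements are needed where you defer to ``bookkeeping.'' First, for $n-k>1$ the componentwise elementary symmetric functions of the sheets cut out a set that is in general strictly larger than $V_n$ (they mix coordinates belonging to different sheets); the correct device is the canonical function $P_\alpha(z,w)=\prod_j\langle z''-\alpha_j(z'),w\rangle$ in an auxiliary variable $w$, whose coefficients $\eta_\mu(z')$ serve as the defining functions --- the same device the paper uses in its sketch of Theorem \ref{bishop} --- and your Montel/continuity-of-roots limit argument then applies verbatim to the $\eta_\mu$. Second, ``pure dimension $k$ because $\pi$ exhibits it as a finite branched cover'' is not by itself sufficient: a proper finite surjection onto $\Delta'$ does not exclude an isolated component of dimension less than $k$. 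Pure dimensionality follows instead from continuity of roots: off the discriminant locus $D\subset\Delta'$ the limit set is a union of $d$ holomorphic graphs, hence a complex $k$-manifold, and every point of the limit set lying over $D$ is a limit of points lying over $\Delta'\setminus D$, so the set equals the closure of its $k$-dimensional regular part and every irreducible component has dimension exactly $k$. With these two repairs your argument is a correct proof of the theorem.
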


As a direct application of this result, one can show the following very useful proposition (see \citep{Stolzenberg}[ch. 4]).

\begin{theorem}[Bishop's proper mapping theorem]\label{bishop mapping} Let
        $\Omega\subset\con$ be a domain that contains $0$ and let
        $S\subset\Omega$ be a closed subset. If $\hk_{2k+1}(S)=0$, then there
        is a suitable coordinate change of $\con$, say, $(z_1,\ldots,z_n)$ and a
        neighborhoods, $\Omega_k\subset\co^k$ and $\Omega_{n-k}\subset\co^{n-k}$, such that
        $0\in\Omega_k\times\Omega_{n-k}\subset\Omega$ and the projection
        \[
        \pi_k:S\cap(\Omega_k\times\Omega_{n-k})\rightarrow\Omega_k\, ,\hspace{0.3cm}\pi_k(z,w):=z,
        \]
        \noindent is a proper map.
\end{theorem}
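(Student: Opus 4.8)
The plan is to recast properness of $\pi_k$ as a statement about the single fibre over the origin, and then to use the hypothesis $\hk_{2k+1}(S)=0$ to choose linear coordinate axes that make this fibre ``radially thin'' near $0$.

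\textbf{Step 1: reducing properness to a slicing condition.} First I would record an elementary observation: if we have chosen linear coordinates and a product neighbourhood $\Omega_k\times\Omega_{n-k}\ni 0$ with compact closure inside $\Omega$ and with $S\cap(\Omega_k\times\partial\Omega_{n-k})=\emptyset$, then $\pi_k$ is automatically proper on $S\cap(\Omega_k\times\Omega_{n-k})$. Indeed, for compact $K\subset\Omega_k$ the set $S\cap(K\times\Omega_{n-k})$ is closed in $\Omega_k\times\Omega_{n-k}$, its closure in $\con$ lies in $K\times\overline{\Omega_{n-k}}\subset\Omega$, and any point of that closure outside $\Omega_k\times\Omega_{n-k}$ would lie in $K\times\partial\Omega_{n-k}$, hence in $S$ by closedness — a contradiction; so the closure is compact and contained in $\Omega_k\times\Omega_{n-k}$. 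Moreover it suffices to produce a single ``gap sphere'': if $S\cap(\{0\}\times\{|w|=\varepsilon\})=\emptyset$ for some small $\varepsilon$, then (that sphere being compact, $S$ closed) a neighbourhood of it avoids $S$, so $S\cap(\overline{B_k(\rho)}\times\{|w|=\varepsilon\})=\emptyset$ for small $\rho$, and $\Omega_k:=B_k(\rho)$, $\Omega_{n-k}:=\{|w|<\varepsilon\}$ work after shrinking $\varepsilon,\rho$ so the closure sits in $\Omega$. Thus the whole problem reduces to: \emph{find a complex $(n-k)$-dimensional subspace $N\subset\con$ such that $S\cap N$ misses arbitrarily small spheres centred at $0$.}

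\textbf{Step 2: choosing the subspace.} Here is where $\hk_{2k+1}(S)=0$ enters. Suppose for contradiction that every complex $(n-k)$-plane $N$ through $0$ is ``bad'', i.e.\ there is $\varepsilon_N>0$ with $S\cap N\cap\{|x|=r\}\neq\emptyset$ for all $0<r<\varepsilon_N$. Since $S$ is closed, badness passes to limits of planes, so on the (compact) Grassmannian of such $N$ the function $N\mapsto\varepsilon_N$ can be taken lower semicontinuous, hence bounded below by a uniform $\varepsilon_0>0$. Now I would extract a contradiction by a dyadic-shell rescaling: for $0<r<\varepsilon_0$ the ``direction set'' $E_r=\{[x]\in\cp^{n-1}:x\in S,\ |x|=r\}$ meets the linear subspace $\mathbb{P}(N)$ for \emph{every} $N$, and an elementary covering bound (the set of $N$ whose projectivisation meets a $\delta$-ball in $\cp^{n-1}$ has measure $\le C\delta^{2k}$) forces $\hk_{2k}(E_r)\ge c>0$ uniformly in $r$; on the shell $\{2^{-j-1}\le|x|<2^{-j}\}$ the Lipschitz map $x\mapsto([x],|x|)$ has constant $\le C2^{j}$, so property~2 of the Hausdorff measure bounds the $\hk_{2k+1}$-measure of the image of $S\cap\{2^{-j-1}\le|x|<2^{-j}\}$ above by $C2^{j(2k+1)}\hk_{2k+1}\big(S\cap\{2^{-j-1}\le|x|<2^{-j}\}\big)$, while — since the image contains every $([x],r)$ with $[x]\in E_r$ and $r$ in the dyadic interval — projecting to $r$ and using the coarea/Eilenberg inequality bounds it below by $c'\int\hk_{2k}(E_r)\,dr\ge c''2^{-j}$; combining and summing the disjoint shells gives $\hk_{2k+1}(S)>0$, a contradiction. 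Hence a good $N$ exists; taking coordinates with $\{0\}\times\co^{n-k}=N$ and invoking Step~1 finishes the proof. (The cases $k=0$ and $k=n$ are degenerate, with properness automatic from closedness of $S$.)

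\textbf{Where the difficulty lies.} Step~1 and the passage from properness to a ``gap sphere'' are routine. The real work is Step~2: the hypothesis only says a priori that $S$ has Hausdorff dimension at most $2k+1$, and one must turn the hypothetical obstruction — that \emph{every} complex $(n-k)$-slice of $S$ is radially thick near $0$ — into a genuinely positive amount of $(2k+1)$-dimensional measure. The lower-semicontinuity/compactness argument on the Grassmannian and the careful tracking of Lipschitz constants across dyadic shells are exactly where this conversion takes place; the coarea inequality itself is standard, being essentially the same covering principle that underlies property~2 above. I note that if one only needs the theorem for a set $S$ arising as a limit $V$ of purely $k$-dimensional analytic varieties of uniformly bounded volume, then Theorem~\ref{bishop sequence} already gives $S$ locally finite $\hk_{2k}$-measure, which trivialises the shell estimates; but the argument sketched above uses nothing beyond $\hk_{2k+1}(S)=0$.
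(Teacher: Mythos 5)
The paper never actually proves Theorem \ref{bishop mapping}: it is quoted from Bishop and Stolzenberg and merely asserted to be ``a direct application'' of Theorem \ref{bishop sequence} (in Stolzenberg's exposition the logical order is in fact the reverse), so your argument must be judged on its own merits. Your Step 1 --- reducing properness to a single gap sphere $S\cap(\{0\}\times\{|w|=\varepsilon\})=\emptyset$ --- is correct, and the overall strategy of Step 2 (an integral-geometric count over the Grassmannian of complex $(n-k)$-planes through the origin) is the standard route; in particular the covering bound $C\delta^{2k}$ is right, since the planes containing a fixed line form a sub-Grassmannian of real codimension $2k$.

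Step 2 does, however, contain a genuine error: the semicontinuity of $N\mapsto\varepsilon_N$ goes the wrong way. What closedness of $S$ and compactness of spheres actually give is that each superlevel set $\{N:\varepsilon_N\ge a\}$ is closed, i.e.\ $\varepsilon_N$ is \emph{upper} semicontinuous; a strictly positive upper semicontinuous function on a compact space need not be bounded away from zero (on $[-1,1]$ take $f(x)=|x|$ for $x\ne 0$ and $f(0)=1$). So the uniform $\varepsilon_0$ does not follow as written, and without an interval of radii on which every (or a positive-measure set of) $N$ is bad, the lower bound $\hk_{2k}(E_r)\ge c$ that feeds your dyadic-shell/Eilenberg estimate is unavailable. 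The gap is repairable: the Grassmannian is a compact metric space equal to $\bigcup_m\{N:\varepsilon_N\ge 1/m\}$ with each term closed, so by Baire's theorem some term contains an open set $U$ of positive measure $\mu(U)$, and your covering bound then gives $\hk_{2k}(E_r)\ge \mu(U)/C$ for all $r<1/m$, which is all the shell computation needs. Alternatively one can dispense with the contradiction scheme entirely, as Bishop and Stolzenberg do, by proving the slicing lemma directly: $\hk_{2k+1}(S)=0$ implies $\hk_{1}(S\cap N)=0$ for almost every complex $(n-k)$-plane $N$ through $0$, whence the set of radii $\{|x|:x\in S\cap N\}$, being a $1$-Lipschitz image, has zero length and the gap spheres exist for free. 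That version is proved by essentially the same covering count you use, but applied once rather than inside a contradiction, and it avoids the semicontinuity issue altogether.
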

\begin{remark} When \(S\) is a purely $k$-dimensional analytic subset and not just a closed subset, then the following theorem implies that the projection \(\pi\) is a ramified analytic covering.

\begin{theorem}\label{Remmert}
  Let \(\Omega=\Omega_{k}\times\Omega_{n-k}\) be a open subset of \(\con\) and \(\pi:\Omega\rightarrow\Omega_{k}\) the projection \((z_{k},z_{n-k})\mapsto z_{k}\). Let \(A\subset\Omega\) be a analytic subset such that \(\pi:A\rightarrow\Omega_{k}\) is proper. Then \(A^{'}=\pi(A)\) is an analytic subset in \(\Omega_{k}\) and \(|\pi^{-1}[z_{k}]\cap A|\) is locally finite in \(\Omega\).
\end{theorem}
See \cite{Chirka}[pp. 47]. With this, the regular points of an analytic set can be characterized.
\end{remark}

\begin{corollary}
  Let \(0\in A\subset\con\) be an analytic set. The point \(0\in A\) is a regular point if and only if there is a open set \(0\in U\subset\con\) and a coordinate plane at \(\co_{k}\) such that the projection \(\pi_{l}:A\cap U\rightarrow\co_{k}\cap U\) is one to one.
\end{corollary}
\begin{remark} Remmert's Proper Mapping Theorem, which states that if $f:X\to{Y}$ is a proper holomorphic map between complex analytic spaces, then the image $f(X)\subset{Y}$  is an analytic subspace is a consequence of Theorem \ref{Remmert} by Bishop.
Apart from its clarity, Bishop's proof is constructive in the sense that it replaces abstract ideal and sheaf theory with direct analytic function bounds and approximations.

\end{remark}
  \section{Consequences of Bishop's proper mapping theorem}

\noindent As mentioned, this result by Bishop can be used to prove many other important
results (see \cite{Stolzenberg}), one of the most significant is the proof of
Remmert-Stein's theorem, which was generalized and proved by Bishop in
\cite{Bishop}.

\begin{theorem}[Remmert-Stein]\label{Rem-Stein}
        Let $\Omega\subset\con$ be an open subset and $Y$ an analytic subset
        of $\Omega$ and let $X$ be a analytic subset of
        $\Omega\setminus Y$. If $Y$ is of dimension at most $k-1$ and $X$ is of pure dimension $k$,
        then the closure of $X$ in $\Omega$, $\overline{X}\cap\Omega$, is an analytic subset of $\Omega$.
\end{theorem}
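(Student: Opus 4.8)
The plan is to localise the statement near points of $Y$ and then apply Bishop's proper mapping theorem (Theorem \ref{bishop mapping}) to the closure $\overline{X}$, exploiting that $\overline{X}$ is trapped between the analytic sets $X$ and $Y$. On $\Omega\setminus Y$ there is nothing to prove, since $X$ is analytic and closed there and $\overline{X}\cap(\Omega\setminus Y)=X$, so it suffices to show that every $p\in Y$ has a neighbourhood $U\subset\Omega$ with $\overline{X}\cap U$ analytic. Fix such a $p$ and translate it to the origin. Because $X$ is closed in $\Omega\setminus Y$, every point of $\overline{X}\setminus X$ lies in $Y$, hence $\overline{X}\subset X\cup Y$. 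Since $X$ is purely $k$-dimensional and analytic it has locally finite $\hk_{2k}$-measure, so by the first of the listed properties of the Hausdorff measure $\hk_{2k+1}(X)=0$; and $Y$, being of dimension at most $k-1$, has locally finite $\hk_{2k-2}$-measure, so likewise $\hk_{2k+1}(Y)=0$. By subadditivity, $\hk_{2k+1}(\overline{X}\cup Y)=0$.

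Choose a ball $B\ni 0$ with $\overline{B}\subset\Omega$ and apply Theorem \ref{bishop mapping} to the closed set $S=(\overline{X}\cup Y)\cap B$: after a linear change of coordinates there are polydiscs $\Omega_k\subset\co^k$ and $\Omega_{n-k}\subset\co^{n-k}$ with $0\in\Omega_k\times\Omega_{n-k}\subset B$ such that $\pi_k:(\overline{X}\cup Y)\cap(\Omega_k\times\Omega_{n-k})\to\Omega_k$ is proper. Restricting to the closed subsets $\overline{X}$ and $Y$, the maps $\pi_k|_{\overline{X}\cap(\Omega_k\times\Omega_{n-k})}$ and $\pi_k|_{Y\cap(\Omega_k\times\Omega_{n-k})}$ are proper, and their fibres --- being compact analytic subsets of the polydisc $\Omega_{n-k}$ --- are finite. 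By Remmert's proper mapping theorem $A:=\pi_k\bigl(Y\cap(\Omega_k\times\Omega_{n-k})\bigr)$ is therefore an analytic subset of $\Omega_k$ of dimension at most $k-1$; thus $A$ is a proper analytic subset and $\Omega_k\setminus A$ is connected and dense. All preimages below are taken inside $\Omega_k\times\Omega_{n-k}$.

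Over $\Omega_k\setminus A$ no fibre of $\pi_k|_{\overline{X}}$ meets $Y$, so $\overline{X}\cap\pi_k^{-1}(\Omega_k\setminus A)=X\cap\pi_k^{-1}(\Omega_k\setminus A)=:X_0$ is a purely $k$-dimensional analytic subset of $(\Omega_k\setminus A)\times\Omega_{n-k}$ on which $\pi_k$ is proper with finite fibres --- i.e.\ a $d$-sheeted branched analytic covering of the connected set $\Omega_k\setminus A$ (if $d=0$ then $X$ is empty near $0$ by pure-dimensionality and we are done, so take $d\ge 1$). Away from its branch locus $X_0$ is a disjoint union of $d$ holomorphic graphs whose ``vertical'' coordinates take values in the bounded polydisc $\Omega_{n-k}$, so the elementary symmetric functions of those values are bounded holomorphic functions on $\Omega_k$ minus an analytic set, and hence extend holomorphically across $A$ to all of $\Omega_k$ by Riemann's extension theorem; the standard local parametrization of analytic covers then assembles these extended data into a purely $k$-dimensional analytic subset $\widetilde{X}\subset\Omega_k\times\co^{n-k}$, a $d$-sheeted branched cover of $\Omega_k$, with $\widetilde{X}\cap\pi_k^{-1}(\Omega_k\setminus A)=X_0$. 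Finally, since $X$ is purely $k$-dimensional while $X\cap\pi_k^{-1}(A)$ is of dimension at most $k-1$ (finite fibres over an at most $(k-1)$-dimensional set), $X_0$ is dense in $X$, so $\overline{X}\cap(\Omega_k\times\Omega_{n-k})$ is the closure of $X_0\cap(\Omega_k\times\Omega_{n-k})$ in $\Omega_k\times\Omega_{n-k}$; the same density argument applied to the finite cover $\widetilde{X}$ identifies that closure with $\widetilde{X}\cap(\Omega_k\times\Omega_{n-k})$. Hence $\overline{X}\cap(\Omega_k\times\Omega_{n-k})=\widetilde{X}\cap(\Omega_k\times\Omega_{n-k})$ is analytic, which proves the local, and therefore the global, statement.

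The step I expect to be the real obstacle is the identification $\overline{X}=\widetilde{X}$ near $p$: the cover $\widetilde{X}$ is reconstructed entirely from data living over $\Omega_k\setminus A$, so a priori it could differ from $\overline{X}$ over the exceptional set $A$, and excluding this is precisely where the hypothesis that $X$ be \emph{purely} $k$-dimensional --- so that $X$ carries no stray lower-dimensional components hiding over $A$ --- is indispensable. By contrast, obtaining the adapted coordinate system costs nothing beyond feeding $\overline{X}\cup Y$, rather than $\overline{X}$ alone, into Theorem \ref{bishop mapping}, and the intervening Weierstrass/local-parametrization bookkeeping is routine.
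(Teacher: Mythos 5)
First, be aware that the paper does not actually prove Theorem \ref{Rem-Stein}: it states the result and defers to Bishop \cite{Bishop} and Stolzenberg \cite{Stolzenberg}, so there is no in-paper argument to measure you against. Your proposal is, in structure, exactly the route those sources (and the surrounding text) intend: use $\hk_{2k+1}(\overline{X}\cup Y)=0$ to obtain adapted proper coordinates from Theorem \ref{bishop mapping}, observe that over $\Omega_k\setminus A$ (with $A=\pi_k(Y\cap(\Omega_k\times\Omega_{n-k}))$) the set $\overline{X}$ coincides with $X$ and is a $d$-sheeted analytic cover, extend the bounded symmetric functions across $A$, and identify the resulting cover with $\overline{X}$ by a density argument. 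That architecture is sound, and the final identification $\overline{X}=\widetilde{X}$ is handled correctly \emph{given} the fibre-finiteness you assume along the way.

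That fibre-finiteness is the one step you assert too quickly. For $Y$ the argument ``compact analytic subset of an open subset of $\co^{n-k}$, hence finite'' is fine. For $\overline{X}$ --- equivalently, for the claim that $X\cap\pi_k^{-1}(A)$ has dimension at most $k-1$, on which your density step rests --- it is not: $\overline{X}$ is not yet known to be analytic, and the fibre $X\cap(\lbrace z\rbrace\times\Omega_{n-k})$ over a point $z\in A$ is analytic only in the complement of the finite set $Y\cap(\lbrace z\rbrace\times\Omega_{n-k})$. A priori it could contain a positive-dimensional analytic piece $D$ whose compact closure is obtained by adjoining finitely many points of $Y$, which would let a $k$-dimensional component of $X$ hide over $A$. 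Excluding this is a genuine, if classical, lemma --- essentially the zero-dimensional case of Remmert--Stein --- and it is also where the hypothesis $\dim Y\le k-1$ bites a second time, by forcing the fibres of $Y$ to be finite. A clean fix: if such a $D$ existed, the maximum principle on an irreducible positive-dimensional branch of $D$ gives $\max_{\overline{D}}\vert P\vert=\max_{\overline{D}\setminus D}\vert P\vert$ for every polynomial $P$, so $\overline{D}$ lies in the polynomially convex hull of the finite set $\overline{D}\setminus D$, which is that finite set itself --- a contradiction. With that lemma inserted (and with the minor remark that Remmert's proper mapping theorem is avoidable: $\pi_k$ is $1$-Lipschitz, so $\hk_{2k-1}(A)=0$, which already makes $\Omega_k\setminus A$ connected and dense and makes $A$ removable for the bounded symmetric functions), your argument is complete and matches the intended Bishop--Stolzenberg proof.
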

This is an essential step toward the proof of Chow's theorem if one is trying to avoid using categorical methods and
quasi-coherent sheaves. This is because Remmert and Stein's result implies that the $\textrm{Cone}(X)$ of a projective analytic
subset of dimension $k$, $X\subset\cp^n$ is an analytic subset of dimension $k+1$ in $\co^{n+1}$, where the cone is defined by
\begin{equation}
    \textrm{Cone}(X):=\pi^{-1}[X]\cup\lbrace0\rbrace,\hspace{0.2cm}\pi:\co^{n+1}\setminus\lbrace 0\rbrace\rightarrow\cp^n.
\end{equation}
\noindent Here $\pi$ is the usual projection of $\co^{n+1}\setminus\lbrace 0\rbrace$ onto the
projective space, so clearly $\textrm{Cone}(X)=\overline{\pi^{-1}[X]}$, and since $\pi$
is an analytic projection, $\pi^{-1}[X]$ is an analytic subset. Then from this
point on, the classical proof is to use the fact that the cone is homothetic-invariant to
show that the ideal of locally defined holomorphic functions that vanish at the
cone has a countable basis. Then, with Hilbert's basis theorem, it is easy to prove the
fact that the ring of germs of holomorphic functions is Noetherian. This shows
that $\textrm{Cone}(X)$ is in fact algebraic, see \cite{Chirka}, but the same result can
be proved without algebraic methods with an equally simple proof using only
the geometric and analytical tools we have presented thus far. We obtain this result from the following consequence of Bishop's proper mapping Theorem.
\begin{theorem}[Bishop]\label{bishop}
        Let $V$ be a purely $k$-dimensional subset of $\con$ and
        let $B(R,0)$ be the standard ball in $\con$ of
        radius $R$. If there is  a constant $C\in\re^{+}$ such that
        \begin{equation}
                \textrm{Vol}_{2k}(V\cap B(R,0))\leq CR^{2k} \hspace*{0.2cm}\forall\, R\in\re^{+},
        \end{equation} then $V$ is algebraic.
\end{theorem}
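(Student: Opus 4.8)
The plan is to exhibit $X$ as the zero set of finitely many polynomials by the classical mechanism of a generic linear projection together with the elementary symmetric functions of its sheets; the only non-formal input will be that some linear projection $\pi\colon\con\to\co^k$ restricts on $X$ to a proper, hence finitely sheeted branched, covering and that $X$ satisfies a linear growth estimate, and these two facts will be extracted from the volume hypothesis by means of Theorem~\ref{bishop sequence}. Regard $\con$ as the standard affine chart of $\cp^n$, write $H=\cp^n\setminus\con$ for the hyperplane at infinity and $\overline X$ for the closure of $X$ in $\cp^n$; a dyadic summation of the hypothesis $Vol_{2k}(X\cap B(R,0))\le CR^{2k}$ over the shells $B(2^{j+1},0)\setminus B(2^j,0)$ shows that it is exactly the growth for which $\overline X$ has finite Fubini--Study volume, so morally the claim is that finite projective volume forces algebraicity.

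The first and decisive step produces the projection. The dilations $X_r:=\tfrac1rX$ satisfy the same estimate, $Vol_{2k}(X_r\cap B(R,0))=r^{-2k}Vol_{2k}(X\cap B(rR,0))\le CR^{2k}$, uniformly in $r$. By compactness of the space of closed subsets of $\con$ in the topology of Hausdorff convergence on compacta, every sequence $r_j\to\infty$ has a subsequence along which $X_{r_j}$ converges, and by Theorem~\ref{bishop sequence} (applied on each ball $B(R_0,0)$, where the volumes of the $X_r$ are bounded by $CR_0^{2k}$) every such limit is a purely $k$-dimensional analytic subvariety of $\con$; moreover it is a cone, since the dilation $\sigma_t$ carries the tail of the sequence to a sequence of the same type. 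A purely $k$-dimensional analytic cone meets a generic complex $(n-k)$-plane only at the origin, so one may choose a complex $(n-k)$-subspace $F\subset\con$ in general position with respect to all of these limit cones; let $\pi\colon\con\to\co^k$ be a linear projection with $\ker\pi=F$ and split coordinates accordingly as $(z,w)\in\co^k\times\co^{n-k}$. If $X$ contained points $(z_j,w_j)$ with $|w_j|/(1+|z_j|)\to\infty$, then $(z_j,w_j)/|(z_j,w_j)|$ would, along a subsequence, converge to a nonzero point of $F$ lying in one of the limit cones, which is absurd; hence $|w|\le M(1+|z|)$ on $X$ for some $M$, and in particular $\pi|_X\colon X\to\co^k$ is proper. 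A proper holomorphic map from a purely $k$-dimensional analytic set to $\co^k$ has finite fibres, because compact analytic subsets of $\con$ are finite, and is open, so it is a surjective branched analytic covering of some degree $d$, branched over an analytic hypersurface $\sigma\subset\co^k$.

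The remainder is the classical Rutishauser--Stoll argument. Over $\co^k\setminus\sigma$ the covering is locally a disjoint union of $d$ holomorphic graphs $w=\varphi_1(z),\dots,w=\varphi_d(z)$ which the monodromy permutes, so for every linear functional $\ell$ on $\co^{n-k}$ the symmetric functions $e^{(\ell)}_i(z):=e_i(\ell(\varphi_1(z)),\dots,\ell(\varphi_d(z)))$ are well-defined holomorphic functions on $\co^k\setminus\sigma$. They are locally bounded near $\sigma$ because $\pi|_X$ is proper, hence extend holomorphically across the hypersurface $\sigma$ by Riemann's extension theorem, and the linear estimate gives $|e^{(\ell)}_i(z)|\le c_\ell(1+|z|)^d$, so each is an entire function on $\co^k$ of polynomial growth, therefore a polynomial. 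Consequently
$$ P_\ell(z,w)\ :=\ \ell(w)^d-e^{(\ell)}_1(z)\,\ell(w)^{d-1}+\cdots+(-1)^d e^{(\ell)}_d(z)\ \in\ \co[z,w] $$
vanishes identically on $X$ for every $\ell$; and over $\co^k\setminus\sigma$ the common zeros of the $P_\ell$, with $\ell$ ranging over a finite family that separates the points of a generic fibre, are exactly the points of $X$. Since $X$ is the closure of its part lying over $\co^k\setminus\sigma$, it is a union of irreducible components of the algebraic set $\bigcap_\ell\{P_\ell=0\}$, and is therefore itself algebraic.

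I expect the whole difficulty to lie in the second step, the passage from the metric inequality to a proper generic projection, and within it in two points: checking that the rescaled family $\{X_r\}$ genuinely falls under Theorem~\ref{bishop sequence} — whose conclusions, $\hk_{2k+1}(C)=0$ and the analyticity of the limit $C$, are what turn the limits into $k$-dimensional analytic cones — and controlling the set of directions along which $X$ can run to infinity, i.e. showing, with the help of the uniform volume bound on the $X_r$, that the union of these limit cones is thin enough (of real dimension at most $2k-1$) for a generic complex $(n-k)$-plane to avoid it away from the origin. Granted this, the symmetric-function step and the remaining bookkeeping are routine.
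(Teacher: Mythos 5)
Your proposal follows essentially the same route as the paper's own sketch: rescale by homotheties, use Theorem~\ref{bishop sequence} (equivalently the proper mapping theorem) on the rescaled family to produce a linear projection $\pi_k$ that restricts to a finite branched covering of $X$ over $\co^k$, and then show the symmetric functions of the sheets are entire of polynomial growth, hence polynomials --- your $P_\ell(z,w)$ is exactly the paper's $P_\alpha(z,w)=\prod_j\langle z-\alpha_j(z),w\rangle$ expanded in powers of $w$, with Cauchy/Liouville estimates supplying the degree bound. If anything, you are more explicit than the paper about the two delicate points (that the subsequential limits of $X_r$ are purely $k$-dimensional analytic cones, and that a generic complex $(n-k)$-plane meets their union only at the origin, which yields the linear growth bound and hence properness), both of which the paper's sketch passes over without comment.
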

\noindent\textbf{Sketch of the proof:} Let $\lbrace R_n\rbrace\subset\re^{+}$
be an unbounded sequence, $R_n\rightarrow\infty$ and define $V_n$ as the image of $V\cap
B(0,R_n)$ by the homothety $z\longmapsto z/R_n$, then $\lbrace V_n\rbrace$ is a
sequence of analytic sets of the unit ball with $\textrm{Vol}_{2k}(V_n)<C$, and such
that $0\in\lim_{n\rightarrow\infty} V_n$. Then by the proper mapping theorem,
there is a neighborhood of $0$, $\Delta=\Delta_k\times\Delta_{n-k}$ such that
the projection on the first factor $\pi_k$ is a $\sigma$-sheeted branched
covering for each $V_n\cap\Delta$, since $R_n\rightarrow\infty$ the balls
$B(0,R_n)$ cover the whole of $\co^{n+1}$ and we deduce that the projection
$\pi_k$ restricted to $V$ is a $\sigma$-sheeted branched covering. From this
let us construct a set of canonically defining functions for $V$; for each
$z\in\con$ we denote
\[
\pi_k^{-1}[\pi_k(z)]\cap V=\lbrace\alpha_1(z),\ldots,\alpha_{\sigma}(z)\rbrace,
\]

\noindent and let $P_{\alpha}:\con\times\con\rightarrow\co$ be

\begin{equation}
    P_{\alpha}(z,w):=\langle z-\alpha_1(z),w\rangle\ldots\langle z-\alpha_{\sigma}(z),w\rangle,
\end{equation}

\noindent we observe that if $z\in V$, $P_\alpha(z,w)=0$, so for $z$ outside of
$V$ we can chose a $w$ such that $\langle z-\alpha_j(z),w\rangle\neq0$ for all
$j$. Now by expanding $P_{\alpha}$ in powers of $w$

\begin{equation}
        P_{\alpha}(z,w)=\sum_{|\mu|\leq\sigma} \eta_{\mu}(z)w^{\mu}
\end{equation}

\noindent then because $\pi_k$ is a branched covering the functions
$\eta_{\mu}$ are analytic. By applying a homothety for a suitable $R_n$, one
can prove using Cauchy's estimates that the functions $\eta_{\mu}$ are in fact
polynomials of degree at most $\sigma-|\mu|$ thereby proving the
theorem.
\section{Proof of Chow's theorem via Bishop's theorems}
    \begin{proof}[Chow via Bishop] By Theorem \ref{Rem-Stein}, $\textrm{Cone}(X):=\overline{\pi^{-1}[X]}$
            is an analytic set of $\co^{n+1}$, and if $X\subset\cp^n$ is of dimension $k$ (real dimension $2k$), then
            $\textrm{Cone}(X)$ has dimension $k+1$. Now the mapping
            $\pi:\sph^{2k+1}\rightarrow\cp^n$ known as \textbf{Hopf's Fibration}, is a \emph{Riemannian fibration} from the sphere with its canonical metric and the \emph{Fubini-Study metric} on projective space, it fibers
            $\cp^n$ into circles (of length $2\pi$), so by Fubini's theorem;
            \[
                \textrm{Vol}_{2k+1}(\textrm{Cone}(X)\cap\sph^{2n+1})= 2\pi\,\textrm{Vol}^{\cp}_{2k}(X):=A,
            \]
            \noindent where $\textrm{Vol}^{\cp}$ is the ``projective volume'',
            meaning the volume form of the complex projective space given by the Fubini-Study metric.
            Clearly, $A$ is finite since $X$ is compact, which means that the volume of the
            intersection $\textrm{Cone}(X)\cap B(R,0)$ grows at most polynomially as
            $R\rightarrow\infty$, since $\textrm{Cone}(X)$ is homothetic-invariant and using
            polar coordinates, we see that
    \[
    \textrm{Vol}_{2k+2}(\textrm{Cone}(X)\cap B(R,0))= \int_{\textrm{Cone}(X)\cap B(R,0)}dr\wedge\sigma_r,
    \]
    \noindent where $\sigma_r$ is the $2k+1$ volume form of
    $S(0,r)\cap{X}$  with
    $S(0,r):=\lbrace\| z\|=r\rbrace\,$ with $0\leq{r}\leq{R}$. We can find a bound for the volume of $\textrm{Cone}(X)\cap B(R,0)$ as follows:

    \[
        \int_{(\textrm{Cone}(X)\cap B(R,0))}dr\wedge\sigma_r=
        \int^R_0 r^{2k+1}dr\,\int_{\textrm{Cone}(X)\cap\sph^{2n+1}}\sigma_1=
        \frac{A}{2k+2}R^{2k+2},
    \]
    \noindent so setting $C=\frac{A}{2k+2}$ by Theorem \ref{bishop} this means
    $\textrm{Cone}(X)$ is algebraic and therefore $X$ is also algebraic.
    \end{proof}
 \begin{figure}[h!]
    \centering \def\svgwidth{180pt} 
\begingroup%
  \makeatletter%
  \providecommand\color[2][]{%
    \errmessage{(Inkscape) Color is used for the text in Inkscape, but the package 'color.sty' is not loaded}%
    \renewcommand\color[2][]{}%
  }%
  \providecommand\transparent[1]{%
    \errmessage{(Inkscape) Transparency is used (non-zero) for the text in Inkscape, but the package 'transparent.sty' is not loaded}%
    \renewcommand\transparent[1]{}%
  }%
  \providecommand\rotatebox[2]{#2}%
  \newcommand*\fsize{\dimexpr\f@size pt\relax}%
  \newcommand*\lineheight[1]{\fontsize{\fsize}{#1\fsize}\selectfont}%
  \ifx\svgwidth\undefined%
    \setlength{\unitlength}{340.15748031bp}%
    \ifx\svgscale\undefined%
      \relax%
    \else%
      \setlength{\unitlength}{\unitlength * \real{\svgscale}}%
    \fi%
  \else%
    \setlength{\unitlength}{\svgwidth}%
  \fi%
  \global\let\svgwidth\undefined%
  \global\let\svgscale\undefined%
  \makeatother%
  \begin{picture}(1,1)%
    \lineheight{1}%
    \setlength\tabcolsep{0pt}%
    \put(0,0){\includegraphics[width=\unitlength,page=1]{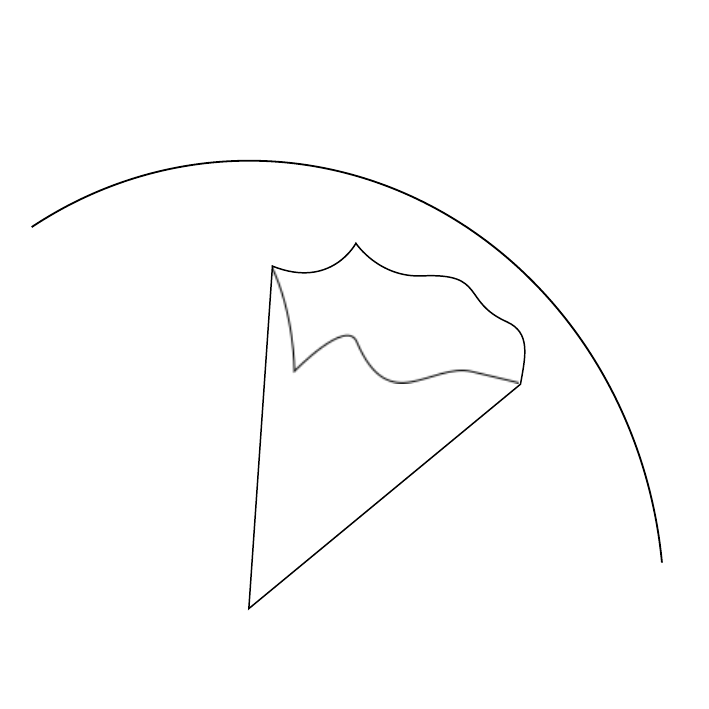}}%
    \put(0.34980729,0.09541632){\color[rgb]{0,0,0}\makebox(0,0)[lt]{\lineheight{1.25}\smash{\begin{tabular}[t]{l}$0$\end{tabular}}}}%
    \put(0,0){\includegraphics[width=\unitlength,page=2]{cone.pdf}}%
    \put(0.73723253,0.38256517){\color[rgb]{0,0,0}\makebox(0,0)[lt]{\lineheight{1.25}\smash{\begin{tabular}[t]{l}$C(X)$\end{tabular}}}}%
    \put(0.08448537,0.75328914){\color[rgb]{0,0,0}\makebox(0,0)[lt]{\lineheight{1.25}\smash{\begin{tabular}[t]{l}$\mathbb{S}^{2n+1}$\end{tabular}}}}%
  \end{picture}%
\endgroup%

    \caption{Cone of $X$ having finite polynomially bounded volume.}\label{fig:cone}
\end{figure}

    \newpage    
         
\section{Proof of Montel's Theorem via Bishop}
We now proceed to prove a generalization of Montel's result of compact subsets of holomorphic functions on the unit disk. We note that there is already another way to prove a version of Montel's result that was proved by Lelong in \cite{Lelong}, although its formulated using the language of integration currents.

\begin{definition}
  A family $F$ of analytic functions on a domain $\Omega\subset\co^{n}$ is \textbf{normal} in $\Omega$ if every sequence of functions $\lbrace{f_n}\rbrace_{n\in\nat}\subseteq{F}$ contains either a subsequence $\lbrace f_{n_k}\rbrace_{k\in\nat}\subset F$  which converges to a limit function $f\not\equiv\infty$,  uniformly on each compact subset of  $\Omega$, or a subsequence  which converges uniformly to $\infty$ on each compact subset.
\end{definition}

The open unit ball on $\con$ will be denoted $B$ with its closure on $\con$ denoted by $\overline{B}$. Let
\[
  \mathcal{A}(B):=C(\overline{B})\cap\hol(B),
\]

\noindent be the Banach algebra of holomorphic functions in $B$ that are
continuous on $\overline{B}$ and therefore also continuous on $S:=\partial B$. We will prove the following result

\begin{theorem}[Montel's Theorem] Let $F\subset\mathcal{A}(B)$, be a set of locally bounded functions, i.e.
for every $z\in B$ there exists an open neighborhood $z\in\Omega\subset B$ such that
\[
  \| f|_{\Omega}\|_{\infty}=\sup\{|f|\,|\,z\in\Omega\}\leq C_{\Omega},\hspace{0.2cm}\,\text{for some }C\in\re^{+},
\]

\noindent then $F$ is a normal set.
\end{theorem}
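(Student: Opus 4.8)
The plan is to represent each $f\in F$ by its graph, which is a purely $n$-dimensional analytic subvariety of $B\times\co\subset\con\times\co\cong\co^{n+1}$, and then to apply Bishop's sequence theorem (Theorem \ref{bishop sequence}) to a Hausdorff-convergent subsequence of such graphs. Fix a sequence $\{f_m\}\subset F$; we must extract a subsequence converging uniformly on every compact subset of $B$ (the usual meaning of normality — uniform convergence on all of $\overline B$ is in general false, as $z\mapsto z^m$ already shows in one variable). Exhaust $B$ by concentric balls $B_j:=\{\Vert z\Vert<r_j\}$ with $r_j\uparrow 1$, and set $M_j:=\sup_m\sup_{\overline{B_j}}|f_m|$, which is finite because the local bounds are uniform over $F$. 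Let $\Omega_j:=B_{j+1}\times\{|w|<2M_{j+1}\}$, a domain in $\co^{n+1}$; for each $m$ the graph $\Gamma^{(j)}_m:=\{(z,f_m(z)):z\in B_{j+1}\}$ is a closed, purely $n$-dimensional complex submanifold of $\Omega_j$.

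First I would bound volumes. By Wirtinger's identity the volume of $\Gamma^{(j)}_m$ equals $\frac{1}{n!}\int_{B_{j+1}}\big(\omega|_{\Gamma_m}\big)^{n}$, and since the pull-back to a holomorphic graph of the standard Kähler form of $\co^{n+1}$ is $\omega|_{\Gamma_m}=\omega_{\con}+\tfrac{i}{2}\,df_m\wedge d\overline{f_m}$ with $df_m\wedge df_m=0$, an elementary expansion collapses all higher terms and gives
$$Vol_{2n}\big(\Gamma^{(j)}_m\big)=Vol_{2n}(B_{j+1})+\int_{B_{j+1}}\Vert\nabla f_m\Vert^2\,dV.$$
The Cauchy estimates applied on $\overline{B_{j+1}}\subset B_{j+2}$ yield a bound $\Vert\nabla f_m\Vert\le L_j$ on $\overline{B_{j+1}}$ independent of $m$; in particular $Vol_{2n}(\Gamma^{(j)}_m)\le K_j$ for all $m$, and, by the same token, $\{f_m\}$ is uniformly Lipschitz — hence equicontinuous — on each $\overline{B_j}$.

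Next I would extract limits. On the compact set $\overline{\Omega_j}$ the Blaschke selection theorem provides, from any sequence of its closed subsets, a $d_H$-convergent subsequence; applying this successively for $j\in\nat$ and diagonalizing yields one subsequence $\{f_{m_k}\}$ whose graphs $\Gamma_{m_k}$ converge, in the sense of $d_H$ on every compact, to a closed set $V\subset B\times\co$. For each $j$, $\Gamma^{(j)}_{m_k}\to V\cap\Omega_j$ with volumes bounded by $K_j$, so Theorem \ref{bishop sequence} applies and gives $\hk_{2n+1}(V\cap\Omega_j)=0$ together with the fact that $V\cap\Omega_j$ is a purely $n$-dimensional analytic subvariety; letting $j\to\infty$, $V$ is a purely $n$-dimensional analytic subvariety of $B\times\co$.

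The last — and, I expect, the main — step is to recognize $V$ as the graph of a holomorphic function. Equicontinuity is exactly what forces $V$ to be single-valued over $B$: if $(z_0,w)$ and $(z_0,w')$ both lie in $V$, write $w=\lim f_{m_k}(\zeta_k)$, $w'=\lim f_{m_k}(\zeta_k')$ with $\zeta_k,\zeta_k'\to z_0$; then $|w-w'|\le\limsup_k L\,\Vert\zeta_k-\zeta_k'\Vert=0$. Since $\{f_{m_k}(z_0)\}$ is bounded and $V$ is closed, the fibre $V\cap(\{z_0\}\times\co)$ is a single point, so $(z,w)\mapsto z$ maps $V$ bijectively onto $B$ (and properly, as $V\cap\Omega_j$ lies inside $B_{j+1}$ times a compact disk, consistently with the remark after Theorem \ref{bishop mapping}); call $f(z)$ the unique second coordinate. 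Every convergent subsequence of the bounded sequence $\{f_{m_k}(z_0)\}$ then has limit $f(z_0)$, so $f_{m_k}\to f$ pointwise on $B$; $f$ inherits the local Lipschitz bounds, and pointwise convergence plus equicontinuity upgrade to uniform convergence on each $\overline{B_j}$. Finally, $f$ is a locally uniform limit of the holomorphic functions $f_{m_k}$, hence $f\in\hol(B)$ (Osgood's lemma, or the iterated Cauchy integral formula), which proves that $F$ is normal. The one delicate point is precisely the passage from $d_H$-convergence of the graphs to pointwise and then uniform convergence of the $f_{m_k}$: this is where the uniform Lipschitz estimate — equivalently, the Cauchy estimates that also underlie the volume hypothesis of Theorem \ref{bishop sequence} — is indispensable.
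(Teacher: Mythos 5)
Your proposal is correct and follows essentially the same route as the paper: represent each $f\in F$ by its graph, use the Cauchy integral formula to bound $\nabla f$ and hence the $2n$-volume of the graphs uniformly on compact subsets, extract a Hausdorff-convergent subsequence of graphs, apply Theorem \ref{bishop sequence} to see the limit is a purely $n$-dimensional analytic subvariety, and identify it as the graph of the holomorphic limit function. The only differences are in execution (Blaschke selection plus diagonalization for the compactness step, and the Lipschitz/equicontinuity argument for single-valuedness of the limit over each fibre, where the paper instead argues directly from uniqueness of Hausdorff limits), and your version is if anything the more carefully quantified of the two.
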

We are going to prove this by means of Bishop's results. First, we note that the graph of a holomorphic function $f:B\rightarrow\co$ is an analytic subset of pure dimension $n$ on $B\times\co$ since $\Gamma_{f}=V_B(w-f(z))=\{(z,w):w-f(z)=0), z\in B, w\in\co\}$ (\ref{vanishing-locus}), where $\Gamma_{f}$ denotes the graph of $f$, and $z=(z_1,\dots,z_n)\in\con$.
Also if $\{f|_{\Omega}\}$ are bounded for all $f\in F$, then $\textrm{Vol}_{2n}(\Gamma_{f|_{\Omega}})$ have uniformly bounded volume as we shall see.
\begin{proof}
Let $\epsilon\in(0,1)$ and we take
$B_{\epsilon}=\overline{B(1-\epsilon,0)}\subset B$, since
$B_{\epsilon}$ is a compact subset of $B$, we have that $F$ is uniformly
bounded in $B_{\epsilon}$. Let $C\in\re^{+}$ be a bound for $|
f(z)|$ for all $z\in B_{\epsilon}$ and $f\in F$, then by Cauchy's
integral formula for the open ball (see \citep{rudin}[Ch. 3, 3.2.4.]), if
$S_{\epsilon}:=\partial B_{\epsilon}$, then
\begin{equation}
  f(z)=\int_{S_{\epsilon}} \frac{f(\zeta)}{(1-\langle z,\zeta\rangle)^{n}}\,d\sigma(\zeta)
  \hspace{0.3cm}\,\forall z\in B_{\epsilon}\setminus S_{\epsilon},
\end{equation}

\noindent where $\sigma$ is the usual Lebesgue measure in $S_{\epsilon}$. Taking the derivative  with
respect to $z_{j}$ we obtain

\begin{equation}
  \dfrac{\partial f}{\partial z_j}(z)=\int_{S_{\epsilon}} \frac{f(\zeta)n\zeta_j}{(1-\langle z,\zeta\rangle)^{n+1}}\,d\sigma(\zeta)
  \hspace{0.3cm}\,\forall z\in B_{\epsilon}\setminus S_{\epsilon},
\end{equation}
\noindent therefore, we have the following bound for the partial derivatives of
all $f\in F$ and $z\in B_{\epsilon}\setminus S_{\epsilon}$,
\begin{equation}\label{bound derivative}
        \left|\dfrac{\partial f}{\partial z_j}(z)\right|\leq C \frac{n \textrm{Vol}_{2n-1}(S_{\epsilon})(1-\epsilon)}{(1-(1-\epsilon)^2)^{n+1}}.
\end{equation}
Now, to show that the hypotheses of theorem \ref{bishop sequence} are satisfied, we use
this constant bound of the derivatives to show that for a sequence of graphs of
functions $\lbrace f_n\rbrace_{n\in\nat}$, the $2n$ dimensional volumes of
their graphs are uniformly bounded in $B_{\epsilon}$ by a constant
$M\in\re^{+}$. This is because the volume of $\Gamma_f$ is given by
\begin{equation}\label{graph volume}
        \textrm{Vol}_{2n}(\Gamma_{f})=\int_{B_{\epsilon}}|\lambda(z)|\,dx_1\ldots dx_{n}dy_1\ldots dy_n,
\end{equation}
\noindent where $z_j=x_j+iy_j$ and $\lambda:B_{\epsilon}\rightarrow\re$ is given by the pullback of the
parametrization function $\varphi(z)=(z,f(z))$
\[
  \varphi^*\omega=\lambda(z)(dx_1\wedge dy_1)\wedge\ldots\wedge(dx_i\wedge dy_n),
\]
\noindent with $\omega$ being the volume form of $\Gamma_f\subset B\times\co$. A straightforward calculation
shows:
\begin{equation}
        \lambda(z)=\sqrt{1 + \|\nabla f\|^2},
\end{equation}
\noindent with
\[
  \nabla f =\Big(\dfrac{\partial f}{\partial z_1},\dots,\dfrac{\partial f}{\partial z_n}\Big).
\]
Therefore, the volumes of all the graphs are uniformly bounded by (\ref{bound derivative}) and (\ref{graph volume}). Now the only thing left to show is the
convergence with respect to the Hausdorff metric of a subsequence of
$\lbrace\Gamma_{f_n}\rbrace$ in $B_{\epsilon}$. This follows from
the fact that $| f(z)|$ is uniformly bounded in $B_{\epsilon}$ for all
$f\in F$. Since the image of all $f_n$ are inside a compact set, it therefore follows that for
$z\in S_{\epsilon}$ there exists a convergent subsequence $\lbrace
f_{n_k}(z)\rbrace$ and by Cauchy's integral formula $\lbrace f_{n_k}(z)\rbrace$ converges uniformly on $B_{\epsilon}\setminus
S_{\epsilon}$. This means that $\Gamma_{f_{n_k}}\rightarrow\Gamma_f$ in the Hausdorff metric with
$\Gamma_f\subset B_{\epsilon}\times\co$, and by theorem \ref{bishop sequence}
we know $\Gamma_f|_{B_{\epsilon}}$ is an analytic subset of pure complex
dimension $n$. Moreover, it is clearly the graph of a holomorphic function
\hbox{$f:B_{\epsilon}\setminus S_{\epsilon}\rightarrow\co$} since at each fiber
$\lbrace z_0\rbrace\times\co$, the intersection of each graph
$\Gamma_{f_{n_k}}$ gives a unique point $(z_0,f_{n_k}(z_0))$.
If there were two distinct points $(z_0,w_1)$ and $(z_0,w_2)$ at the
intersection $(\lbrace z_0\rbrace\times\co)\cap\Gamma_f$, then taking a
compact set $K$ containing $\lbrace f_{n_k}(z_0)\rbrace\cup\lbrace
w_1,w_2\rbrace$, by Hausdorff convergence we would have that the limit
of $\lbrace f_{n_k}(z_0)\rbrace$ is not unique. Thus
$\Gamma_f$ has to be the graph of a holomorphic function in $B_{\epsilon}$ for all
$\epsilon\in(0,1)$. By analytic continuation, we thus define $f$ in all of $B$. The graph
of $\Gamma_f$ can be defined by 
$\Gamma_f=\left\{(z,w)\,:\,\,F(z,w):=f(z)-w=0,\, z\in{B}, w\in\,\mathbb{C}\right\}$. The graph
is the zero locus of the holomorphic function $F(z,w)=f(z)-w$, since 
$\partial{F}/\partial{w}=1$, the complex gradient of $F$ does not vanish; it follows by the implicit function theorem that $\Gamma_f$ is smooth.
\end{proof}
\begin{remark}
  As previously noted, there is a version of Montel's theorem proved by Lelong in terms of the integration currents defined
  by analytic subsets of pure dimension $k$. An integration current defined by an analytic subset of dimension $k$,
  \(A\) is the linear operator
  \[
    t(A)[\phi] = \int_A\phi,\hspace{0.1cm}\text{where $\phi$ is a differential $(k,k)$ form with compact support}.
  \]
  In this case the boundedness refers to the norm of the aforementioned operators, where we say that a family $F$ of
  purely $k$ dimensional analytic subsets are locally bounded if \hbox{$\{\| t(A)\|\,|\,A\in F\}$}
  are bounded for every compact set. See \cite{Lelong}    
\end{remark}
The following table shows the similarities between the results shown here and others in the classical theory
of holomorphic functions of one variable.
\begin{table}[h!]
        \caption{Similarities between complex analysis and the theory of analytic sets.}\label{table_complex_analysis}
        \centering
        \begin{tabular}{| m{5cm} | m{5cm} |}
                            \toprule
                            \begin{center} 
                                    \underline{\textbf{Complex Analysis}}
                            \end{center} &
                            \begin{center} 
                                    \underline{\textbf{Theory of analytic sets}}
                            \end{center} \\
                            \midrule
                            \begin{center}
                                    \textit{Liouville's theorem}
                            \end{center} &
                            \begin{center}
                                    \textit{Bishop's theorem (Theorem \ref{bishop})}
                            \end{center}\\
                            \midrule
                                If $| f(z)|\leq C\,R^k$ for $| z|\leq R$ for all $R\in\re^{+}$ with $f$
                                entire, $k$ a positive integer and $C$ a positive constant, then $f$ is a polynomial.
                                         &
                                If $\textrm{Vol}_{2k}(X\cap B(R,0))\leq CR^{2k}$ for all $R\in\re^{+}$ with $C$ a positive constant
                                and $X$ an analytic subset, then $X$ is algebraic.\\
                            \midrule
                            \begin{center}
                                \textit{Riemann's extension theorem.}
                            \end{center}
                                &
                            \begin{center}
                                \textit{Bishop's generalization of Remmert-Stein's theorem.}
                            \end{center} \\
                            \midrule
                                If $f:(\Omega\setminus E)\subset\co\rightarrow\co$ is a holomorphic function and
                                $E$ is a compact subset of capacity $0$, then $f$ is extendible to a holomorphic
                                function on the whole region $\Omega$.
                                         &
                                Let $U\subset\con$ be a bounded open
                                subset and let $B\subset U$ be closed with $X\subset U\setminus B$ a purely k
                                dimensional analytic subset such that $B\subset\overline{X}$. If $B$ has
                                capacity $0$ relative to the algebra of analytic functions on $X$ that are
                                continuously extendible to $\overline{X}$, and if it exists
                                $f:U\rightarrow\co^k$ proper on $B$ with $f(B)$ not an open connected subset of
                                $\co^k$, then $\overline{X}\cap U$ is an analytic subset of $U$
                                (see \cite{Bishop}[Theorem 4]).\\
                            \midrule
                            \begin{center}
                                    \textit{Montel's compactness theorem.}
                            \end{center}
                                         &
                            \begin{center}
                            \textit{Lelong's normal currents theorem (consequence of Bishop's sequence theorem) see \cite{Lelong}}
                            \end{center}\\
                            \midrule
                                    Let $F$ be a family of locally bounded
                                    holomorphic functions $f_i:\Delta\rightarrow\co$.
                                    Then $F$ is a normal family if and only if $F$ is locally uniformly bounded.
                                         &
                                    Let $F$ be a family of analytic subsets and let $t(F)$
                                    be the set of integration currents defined by $F$, suppose $t(F)$
                                    is locally bounded (in terms of the operator norm).
                                    Then $t(F)$ is a normal family if and only if its locally uniformly bounded.\\
                            \bottomrule
            \end{tabular}
\end{table}
\clearpage

\section{Foliations on Kähler manifolds with all leaves compact}
\noindent In this final section, we apply Bishop's sequence theorem (theorem \ref{bishop sequence})
to prove a version of \citep{EMS}[Theorem 1], for the particular case of compact complex foliations on
Kähler manifolds with all leaves compact, also we prove that a holomorphic foliation in a compact Kähler manifold with at least \textbf{one} compact leaf with finite holonomy, then all of its leaves are compact.

\begin{theorem}\label{kahlerEMS}
Let $M$ be a compact connected Kähler manifold
of complex dimension $n$ and $\mathfrak{F}$ a holomorphic foliation
with leaves of complex dimension $d<n$ and with all leaves compact,
then:
\begin{enumerate}
        \item[1] The $2d$-dimensional volume (with
                respect to the Kähler metric) of the leaves is uniformly
                bounded.
        \item[2] The quotient space $M/\mathfrak{F}$ is a complex orbifold and
        the quotient map $\pi:M\to{M/\mathfrak{F}}$ is an analytic map.
         
        \item[3] The singular set $\mathcal S$ of $M/\mathfrak{F}$ is the saturated set that corresponds to the leaves with nontrivial holonomy group (which, by the first proposition, is a finite group). Thus, the singular set $S$ is an analytic subset of $M/\mathfrak{F}$. In particular, since $M$ is connected ($\mathcal S$ is of real codimension greater than one), it follows that the complement ${\mathcal R}:=M\setminus{\mathcal S}$ is connected and the  $2d$-dimensional volume (with
respect to the Kähler metric) of leaves in $\mathcal R$ is constant.
\end{enumerate}
\end{theorem}

First, throughout this theorem and the following results, we are going to make use of the following fundamental theorem, due to W. Thurston (\cite{Thurston}) and called the {\it generalized Reeb stability theorem}:
\begin{theorem}[W. Thurston \cite{Thurston}]\label{Reeb}
Let $\F$ be a smooth foliation of dimension $m$ on the smooth, compact $n$-manifold $M$
($0\leq{m}\leq{n}$) and $\mathcal{L}$ be a compact leaf without holonomy. Then, there exists a \emph{saturated} neighborhood, say $\Omega$, of $\mathcal{L}$ such that $\Omega$ is diffeomorphic to a product $\mathcal{L}\times D^{n-m}$ were 
$D\subset{\mathbb R}^{n-m}$ is an open disk, such that every leaf in $\Omega$ is of the form $\mathcal{L}\times{\left\{p\right\}}$ with $p\in{D}$. Moreover, if $\mathcal{L}$ has \textbf{\emph{finite holonomy}}, there exists a \emph{saturated} tubular neighborhood of 
$\mathcal{L}$, $\Omega$ in which the foliation is a flat disk bundle over $\mathcal{L}$.
In particular, if $M$ is a compact complex manifold (not necessarily Kähler) and $\F$ is a holomorphic foliation, there exists a saturated neighborhood, $\Omega$ of $\mathcal{L}$ such that $\Omega$ is diffeomorphic (although not necessarily biholomorphic) to a product
  $\mathcal{L}\times D$ with $D\subset\co^{q}$ an open polydisk, such that every leaf in $\Omega$ is of the form $\mathcal{L}\times{\left\{p\right\}}$ with $p\in{D}$
\end{theorem}
The proof of the second proposition of the theorem uses the following
{\it Quotient Theorem} of H. Cartan:

\begin{theorem}[H. Cartan \cite{Cartan}]\label{Cartan} Suppose that $X$ is a complex manifold, $G$ is a group of holomorphic automorphisms of $X$ acting properly discontinuously on $X$ (thus $G$ must be discrete), then the quotient $X/G$ has the structure of an analytic space such that the quotient map $X\to{X/G}$ is analytic map. The singular set of $X/G$ corresponds to the set $S$ of points with a nontrivial isotropy subgroup {\it i.e.,} the singular set is
$\pi(S)\subset {X/G}$. In particular, if $X$ is compact $\pi$ is a proper analytic map and
 $\pi(S)$ is an analytic subset of $X/G$, by Grauert's Proper Mapping Theorem \cite{Grauert}
 \end{theorem}
 
 \begin{remark} \label{orbifold} In Theorem \ref{Cartan} we can say more. Since the action is
 proper and discontinuous, the isotropy subgroups of points are finite groups, and
 therefore, $X/G$ is an orbifold, endowed with natural orbifold charts of the form
 $\pi\circ\varphi:U\to{X/G}$, where $\varphi:U\to{M}$ is a local holomorphic chart of $M$.
 \end{remark}

\begin{proof}[Proof of Theorem \ref{kahlerEMS}]
The product structure in Theorem \ref{Reeb} immediately proves the continuity of the volume function (with respect to any Riemannian metric) of the leaves contained in $\Omega$: if $\mathcal{L}_z$ is the leaf through $z\in\Omega$ and $\omega^{2d}(z)$ is the $2d$-dimensional volume form, induced by $g$, on the leaf $\mathcal{L}_z$, then the fuction:

 \begin{equation}\label{oneline}
 p\mapsto  \int_{\mathcal{L}\times{\left\{p\right\}}}\omega^{2d}_{p},
\end{equation}
depends differentiably on $p$ since both the volume form $\omega_p$ and the integrating manifold ${\mathcal{L}\times{\left\{p\right\}}}$ depend differentiably on $p$ (see \cite{Simon}).
   
Now lets define the \emph{leaf volume function} $\nu:M\rightarrow\re^{+}$ as $z\mapsto \textrm{Vol}_{2d}(\mathcal{L}_z)$, where as before, $\mathcal{L}_z$ is the leaf through $z$. Now, Epstein, Millet, and Tischler proved in \cite{EMT} that the set of leaves without holonomy (known as generic leaves)

\begin{equation}\label{generic-leaves}
  H_0=\lbrace x\in M\,|\,\mathcal{L}_x\text{ has trivial holonomy}\rbrace,
\end{equation}
\noindent is a dense $G_\delta$ set. Moreover, the stability theorem implies that the set of leaves with trivial holonomy is also open by the previously discussed product structure around generic leaves.

We show that $\nu$ is locally constant in the set of generic leaves, let $z\in H_0$ and let $\mathcal{L}_z$ be the leaf containing $z$, since $\mathcal{L}_z$ has zero holonomy, by the generalized Reeb stability theorem, there exists a tubular neighborhood of $\mathcal{L}_z$, say $\Omega$, which is diffeomorphic to a disk bundle over $\mathcal{L}$, with fiber $D$, the open unit disk in $\co^{n-d}$, and such that $\Omega$ is a saturated open subset of $M$. Since every leaf in $\Omega$ is homologous to $\mathcal{L}_z$, by Wirtinger's inequality (Collorary \ref{wirtinger}), all leaves in $\Omega$ have the same volume; therefore $\nu$ is locally constant in $H_0$.


Now, $\nu$ is not continuous in general but rather lower semicontinuous, but we will prove more than that;
the volume function $\nu$ is, in fact, \textbf{\emph{discretely}} lower-semicontinuous, meaning that for any
$n\in\zah^+$, $z\in M$ and $\epsilon\in\re^+$, there is a small enough
neighborhood of $z$ such that either
\[
  \nu(y)>n\,\nu(z)\hspace{0.2cm}\text{or}\hspace{0.2cm}|\nu(y)-k\,\nu(z)|<\epsilon \hspace{0.2cm}\text{for some }\,k\in\lbrace 1,\ldots,n\rbrace.
\]
We note here that semicontinuity can be proved by showing that the leaf space $M/\mathfrak{F}$ is Hausdorff (see \citep{EMS}[p. 20]).
Now to prove discrete lower-semicontinuity, we work locally; an easy consequence of the generalized Reeb Stability Theorem \ref{Reeb} is that for leaves with finite holonomy, there exists a tubular neighborhood of a leaf $\mathcal{L}_z$, say $W$ and a bundle retraction $\rho: W\rightarrow\mathcal{L}_z$ with $\rho^{-1}(x)$ homeomorphic to a disk.
For every leaf $\mathcal{L}_y$, the restriction $\rho|_{W\cap\mathcal{L}_y}:(W\cap\mathcal{L}_y)\rightarrow\mathcal{L}_z$ is a codimension zero submersion, if $y$ is sufficiently close to $z$, then $\mathcal{L}_y\subset W$ and also the image under $\rho$ of the leaf $\mathcal{L}_y$ covers all of $\mathcal{L}_z$.
Therefore, by compactness and analyticity of the leaves, $\mathcal{L}_y$ is a finitely sheeted covering space of $\mathcal{L}_z$ with covering transformation $\rho|_{\mathcal{L}_y}$ which proves the discrete lower-semicontinuity of $\nu$ for the set of leaves with finite \textbf{bounded} holonomy.
With this, the only thing left in order to show semicontinuity is to show that the set where $\nu$ is not locally bounded is empty, proving this is equivalent to showing that all leaves have finite bounded holonomy. The set where $\nu$ is not locally bounded is also known as the ``bad set'':
\[
    B:=\lbrace x\in M\,|\, \nu\,\text{is not bounded in a neighborhood of }x\rbrace,
\]
and is a saturated compact set of codimension greater than or equal to 2 (see \cite{Epstein}).
By definition $B$ is the union of leaves for which $\nu$ is not bounded around them, therefore its a saturated set, also, $B$ is a closed set, since for any convergent sequence of points in $B$, \(x_n\rightarrow x\) there is another sequence of points $\{x_{n_k}\}$ arbitrarily close to $x_n$ such that $\{\nu(x_{n_k})\}$ is unbounded.
Therefore since $\{x_n\}$ is arbitrarily close to $x$, by a diagonal argument there is a sequence $\{x_{n_l}\}$ arbitrarily close to $x$ such that $\nu(x_{n_l})$ is unbounded, therefore $B$ is a saturated closed (compact) subset.
Now let $\{x_n\}$ be a sequence in $B$, suppose without loss of generality that every \(x_i\) is on a different leaf, since $B$ is a closed compact saturated set and all leaves are compact, there is a finite covering of $B$ by open saturated sets $U_j$ such that for any convergent subsequence $\{x_{n_k}\}$, their corresponding leaves $\{\mathcal{L}_{n_k}\}$ converge $\mathcal{L}_{n_{k}}\to\mathcal{L}$ with respect to the Hausdorff metric in $U_j\cap B$ for some $j$, where $\mathcal{L}\subset M$ is a non-empty closed set. Note that the sub sequence of points must converge also \(x_{n_k}\to{x}\in U_j\subset B\).
Now, $U_j\cap B$ is a saturated open set such that eventually every leaf $\mathcal{L}_{n_k}$ stays in $U_j$, since $x\in U_j$ and $x_{n_k}\to{x}$, therefore, we may assume (maybe by shrinking $U_j$ a little bit) that eventually every leaf will be homologous to each other on $U_j$, and by corollary \ref{wirtinger}, there exist a large enough positive integer $N$ such that all leaves $\mathcal{L}_{n_k}$ have the same volume for $k>N$. Therefore, by theorem \ref{bishop sequence}, $\mathcal{L}$ is an analytic subset of $U_j$ of complex dimension $d$.
Now, tangency to $\mathfrak{F}$ is defined locally by the null space of $d$ holomorphic 1-forms, by Hausdorff convergence, this tangency is preserved on the limit (being a closed condition), so $\mathcal{L}$ is tangent to $\mathfrak{F}$ and therefore $\mathcal{L}=\mathcal{L}_x$ the leaf through $x$.
Since $\{x_{n}\}$ was arbitrary and we have showed that every leaf on $B$ has a convergent sequence of leaves near them with bounded volumes in a neighborhood of $B$, therefore $B=\emptyset$.


The second assertion follows easily from Theorem \ref{Reeb}, since every leaf has finite and bounded holonomy, for ever leaf $\mathcal{L}$ we have an open saturated set $U$ biholomorphic to $\mathcal{L}_z\times D$ so $M/\mathfrak{F}$ is locally homeomorphic to $D$.
Furthermore, $M/\mathfrak{F}$ is Hausdorff since every leaf is compact, so if $\mathcal{L}_1$ and $\mathcal{L}_2$ are two distinct leaves, then, there are $\lbrace\epsilon_1,\epsilon_2\rbrace\subset\re^+$ such that the sets
\[
  D_i:=\lbrace z\in M\,|\, d_H(\mathcal{L}_i,z)<\epsilon_i\rbrace,\hspace{0.2cm} i\in\lbrace1,2\rbrace,
\]
\noindent are disjoint, so intersecting with a saturated tubular neighborhood
of $\mathcal{L}_{i}$, we have that $M/\mathfrak{F}$ is Hausdorff. Finally, if
$\mathcal{L}$ has non-trivial holonomy, then by the boundedness of the volume function, the
holonomy group $H(\mathcal{L})$ is finite (see \citep{EMS} [p. 20]) and
$M/\mathfrak{F}$ is locally homeomorphic to $D/H(\mathcal{L})$, where
$\mathcal{L}$ has a tubular neighborhood  homeomorphic to $\mathcal{L}\times D$.
On the other hand, H. Cartan's Theorem \ref{Cartan} implies that $D/H(\mathcal{L})$ is an analytic set and the quotient map $D\to{D/H(\mathcal{L})}$ is analytic. 

The arguments above prove
that the quotient $M/\mathfrak{F}$ is an analytic set, and the quotient map
$\pi:M\to{M/\mathfrak{F}}$ is analytic. It also follows from Theorem \ref{Cartan} that the union of the set of leaves with nontrivial holonomy is an analytic subset of $M$, and, as noted in Remark \ref{orbifold}, $M/\mathfrak{F}$ is a complex orbifold.
This proves items (2) and (3) of Theorem \ref{kahlerEMS}
\end{proof}
We finish this paper with a proof of a result of Jorge Vitorio Pereira, which guarantees the compactness of all leaves in a compact Kähler manifold provided that it is known that at least \textbf{one} of the leaves is compact and has finite holonomy (see \cite{Pereira}[Theorem 1]).
\begin{theorem}\label{compact-leaf}
Let $M$ be a compact connected Kähler manifold of dimension $n$ and $\mathfrak{F}$ be a holomorphic foliation of codimension $q<n$ for which there is at least one compact leaf with finite holonomy, then all leaves are compact with finite holonomy or equivalently their volumes are uniformly bounded and the leaf space is Hausdorff.
\end{theorem}
\begin{proof}
  Let us denote by $\mathcal L_{0}$ to the compact leaf with finite holonomy of $\mathfrak F$ and define the following set
\[
    \mathcal{U}:=\{x\in M\,|\, \text{ the leaf through $x$ is compact with finite holonomy}\}.
\]
Clearly $\mathcal{L}\subset\Omega\subset\mathcal{U}$ and therefore $\mathcal{U}$ is a non-empty saturated set.
Also by the same argument as before every leaf in $\mathcal{U}$ has a tubular saturated neighborhood comprised of
compact leaves with finite holonomy, therefore for every point $z\in\mathcal{U}$ there is an open saturated
neighborhood $W$ of $z$ such that $z\in W\subset\mathcal{U}$ i.e. \(\mathcal{U}\) is an \emph{open} saturated set.
Note that $\mathcal{U}$ is the union of all saturated open sets with all its leaves compact and with finite holonomy;
another way to describe it is as the maximal open set of saturated sets with all its leaves compact and with finite holonomy.
We show that this set is also closed to finish the proof. Let $\{z_{n}\}\subset\mathcal{U}$ be a convergent sequence
with limit $z_{n}\rightarrow z_{0}$, since $\{z_{n}\}$ is convergent, in particular is a Cauchy sequence.
Moreover, let $\mathcal{L}_{n}$ be the leaf through $z_{n}$ and $\mathcal{L}_{0}$
the leaf through $z_{0}$, since $z_k\in\mathcal{U}$, every $\mathcal{L}_k$ is compact with finite holonomy, to show
$z_{0}\in\mathcal{U}$ we need to show $\mathcal{L}_{0}$ is compact and has finite holonomy, first since $\{z_{k}\}$ is a Cauchy
sequence, the leaves $\{\mathcal{L}_{k}\}$ are eventually arbitrarily close between them. Since all of them are compact
submanifolds of a Kähler manifold, they minimize the volume of their homology classes respectively, therefore, there is a
$N\in\nat$ such that for all \(\{\mathcal{L}_{k}\,|\,k\geq N\}\) are in the same homological class, therefore eventually the
volume of $\{\mathcal{L}_k\}$ is constant.
Now for any foliated chart $V$ around $z_{0}$, let $\mathcal{L}_{k}\cap V$ be a sequence in the intersection
$V\cap\mathcal{U}$, since $z_{0}$ is a limit point of $\mathcal{U}$ and $\mathcal{U}\cap V\neq\emptyset$, by convergence
there is a $N\in\nat$ such that for $k\geq N$, $z_k\in V$. Moreover, since $V$ is a foliated chart,
$V$ is biholomorphic to a product $W_q\times W_{n-q}$ and every leaf $\{\mathcal{L}_k\cap V\}_{k\geq N}$ is the set
$\varphi^{-1}(W_q\times\{z_k\})$, here we abused the notation a little bit by identifying the points of the
convergent sequence with their corresponding transversal coordinates.
Its clear that \hbox{$V\cap\mathcal{L}_{k}\rightarrow V\cap\mathcal{L}_{0}$} as closed subsets of $\mathcal{U}\cap V$,
since $z_k\rightarrow z_0$ and the plaques $W_q\times\{z_k\}$ clearly converge to $W_q\times\{0\}$ by continuity of $\varphi$.
Therefore, since \(\mathcal{L}_k\) are compact connected, and with finite (linear) holonomy and bounded volume,
by Bishop's sequence theorem (theorem \ref{bishop sequence}), \(\{\mathcal{L}_k\}\) converges to a closed (compact) pure-dimensional analytic subset of $\mathcal{L}_{0}$ that is tangent to $\mathfrak{F}$ at $z_0$ therefore
\hbox{$\mathcal{L}_{k}\rightarrow\mathcal{L}_{0}$} since leaves are connected by definition.
This means that $\mathcal{L}_{0}$ is compact analytic set with finite holonomy since the volume of the leaves is bounded and nearby leaves cover $\mathcal{L}_{0}$,
therefore $\mathcal{U}=M$ since its both open and closed and $M$ is connected.
\end{proof}

\subsection*{\bf Compliance with ethical standards}
 {\bf Conflict of interest.} The authors declare that they do not have conflict of interests.
\thebibliography{100}

\bibitem{Bishop} Bishop, E.: {\it Conditions for the Analyticity of certain sets}. Michigan Math. J.11 No. 4,
289--304 (1964). 
\doi{https://doi.org/10.1307/mmj/1028999180}

\bibitem{Cartan} Cartan, H.: {\it Quotient d'un espace analytique par un groupe d'automorphismes.} A symposium in honor of S. Lefschetz, Algebraic geometry and topology. pp. 90--102. Princeton University Press, Princeton, N. J. 1957.

\bibitem{Chirka} Chirka, E.M.: {\bf Complex Analytic Sets.} Kluwer Academic, Dordrecht, The Netherlands
(1989). 
\doi{https://doi.org/10.1007/978-94-009-2366-9}

\bibitem{EMS} Edwards, R.  Millett, K. C., Sullivan, D.:{\it Foliations With All Leaves Compact.} Pergamon Press,
Topology 16, 13--32 (1977)

\bibitem{Epstein}Epstein, D.B.A.: {\it Periodic Flows on Three-Manifolds}. Annals of Mathematics, Second Series 95 No.1, 66–82 (1972)

\bibitem{EMT} Epstein, D.B.A., Millett,Tischler, D.:{\it Leaves Without Holonomy}. 
Journal of the London Mathematical Society 16 (3), 548--552 (1977).  

\bibitem{Grauert} Grauert, H.: {\it Ein Theorem der analytischen Garbentheorie und die Modulräume komplexer Strukturen}. Publ. Inst. Hautes Études Sci. No 5, 233--292 (1960).

\bibitem{Haefliger} Haefliger, A.: {\it Structures feuilletées et cohomologie à valeur dans un faisceau de groupoïdes}.
Commentarii Mathematici Helvetici, Vol 32, 248--329 (1958).

\bibitem{Harvey} Harvey, R., Lawson, H. B.: {\it Calibrated geometries}.
Acta Math, Vol 148, 47-157, (1982).

\bibitem{Lelong} Lelong, P.:{ \it Propiétés métriques des ensembles analytiques complexes}. Les rencontres physiciens-mathématiciens de Strasbourg - RCP25, tome 8. 1--13 (1969).  

\bibitem{Pereira} Pereira, J.V.: {\it Global Stability for Holomorphic Foliations in 
Kähler Manifolds}. Qualitative Theory of Dynamical Systems 2, 381--384 (2001)

\bibitem{rudin} Rudin, W.: {\bf Function Theory in the Unit Ball Of $\co^n$.} Springer, Berlin Heidelberg (1980).
\doi{https://doi.org/10.1007/978-3-540-68276-9}

\bibitem{Simon} Simon, L, {\bf Lectures on geometric measure theory.} Proceedings of the Centre for Mathematical Analysis, Australian National University, 3. Australian National University, Centre for Mathematical Analysis, Canberra, 1983.

\bibitem{Stolzenberg} Stolzenberg, G.: {\bf Volumes, Limits and Extensions of Analytic Varieties.} Lecture Notes in
Mathematics, Springer-Verlag, Berlin (1966). 
\doi{https://doi.org/10.1007/BFb009773}

\bibitem{Thurston} Thurston, W.P.: {\it A Generalization of Reeb Stability Theorem.} Pergamon Press, Topology 13, 347--352 (1974)

\endthebibliography
\end{document}